\theoremstyle{definition}
\newtheorem{thm}{Theorem}[section]
\newtheorem{lem}[thm]{Lemma}
\newtheorem{rmk}[thm]{Remark}
\DeclareMathOperator{\NVol}{NVol}
\DeclareMathOperator{\vol}{vol}
\DeclareMathOperator{\conv}{conv}
\DeclareMathOperator{\PQ}{PQ}
\newcommand{\alphatri}{\alpha^{\triangle}}
\newcommand{\betatri}{\beta^{\triangle}}
\newcommand{\gammatri}{\gamma^{\triangle}}
\newcommand{\gammatrip}{\gamma^{'\triangle}}
\newcommand{\scrAtri}{\mathscr{A}^{\triangle}}
\newcommand{\scrBtri}{\mathscr{B}^{\triangle}}
\newcommand{\scrCtri}{\mathscr{C}^{\triangle}}
\newcommand{\calM}{\mathcal{M}}
\newcommand{\calN}{\mathcal{N}}
\newcommand{\calP}{\mathcal{P}}
\newcommand{\fkD}{\mathfrak{D}}
\newcommand{\e}{\bm{e}}
\newcommand{\RR}{\mathbb{R}}
\newcommand{\ZZ}{\mathbb{Z}}
\newcommand{\scrXcir}{\mathscr{X}}
\newcommand{\scrYcir}{\mathscr{Y}}
\newcommand{\scrZcir}{\mathscr{Z}}
\newcommand{\APQ}{\nabla^{\PQ}}
\newcommand\commentout[1]{}
\def\@tocline#1#2#3#4#5#6#7{\relax
  \ifnum #1>\c@tocdepth 
  \else
\par \addpenalty\@secpenalty\addvspace{#2}%
\begingroup \hyphenpenalty\@M
\@ifempty{#4}{%
  \@tempdima\csname r@tocindent\number#1\endcsname\relax
}{%
  \@tempdima#4\relax
}%
\parindent\z@ \leftskip#3\relax \advance\leftskip\@tempdima\relax
\rightskip\@pnumwidth plus4em \parfillskip-\@pnumwidth
#5\leavevmode\hskip-\@tempdima
  \ifcase #1
   \or\or \hskip 1em \or \hskip 2em \else \hskip 3em \fi%
  #6\nobreak\relax
\hfill\hbox to\@pnumwidth{\@tocpagenum{#7}}\par
\nobreak
\endgroup
  \fi}
\begin{document}


\title{Normalized Volumes of Type-PQ Adjacency Polytopes for Certain Classes of Graphs}

\author{Robert Davis}
\address{Department of Mathematics\\
 Colgate University\\
 Hamilton, NY USA}
\email{rdavis@colgate.edu}

\author{Joakim Jakovleski}
\address{Department of Mathematics\\
 Colgate University\\
 Hamilton, NY USA}
\email{jjakovleski@colgate.edu}

\author{Qizhe Pan}
\address{Department of Mathematics\\
 Colgate University\\
 Hamilton, NY USA}
\email{qpan@colgate.edu}

\thanks{The authors were supported in part by NSF grant DMS-1922998 and by Colgate University.}

\maketitle



\begin{abstract}
	The type-PQ adjacency polytope associated to a simple graph is a $0/1$-polytope containing valuable information about an underlying power network.
	Chen and the first author have recently demonstrated that, when the underlying graph $G$ is connected, the normalized volumes of the adjacency polytopes can be computed by counting sequences of nonnegative integers satisfying restrictions determined by $G$.
	This article builds upon their work, namely by showing that one of their main results -- the so-called ``triangle recurrence'' -- applies in a more general setting.
	Formulas for the normalized volumes when $G$ is obtained by deleting a path or a cycle from a complete graph are also established.
\end{abstract}

\section{Introduction}

Let $N$ be a positive integer.
A \emph{polytope} $P \subseteq \RR^N$ is the convex hull of finitely many points $v_1,\dots,v_d \in \RR^N$, that is,
\[
	P = \conv\{v_1,\dots,v_d\} = \left\{ \sum_{i=1}^d \lambda_iv_i \mid \lambda_1,\dots,\lambda_d \geq 0, \sum_{i=1}^d \lambda_i = 1\right\}.
\]
To a simple, undirected graph $G$ on $[N] = \{1,\dots,N\}$, the associated \emph{type-PQ adjacency polytope} is
\[
	\APQ_G = \conv\{(\e_i, \e_j) \in \RR^{2N} \mid i=j \text{ or } ij \in E(G)\}
\]
where $\e_i$ is the $i^{th}$ standard basis vector of $\RR^N$ and $E(G)$ is the set of edges of $G$.
Note that since $G$ is undirected, an edge $ij$ can also be written $ji$, meaning that a single edge of $G$ produces two points $(\e_i,\e_j)$ and $(\e_j,\e_i)$ used in the construction of $\APQ_G$.

Type-PQ adjacency polytopes arise in the study of power-flow equations of electrical networks \cite{ChenMehtaPQ}.
These polytopes are in contrast to type-PV adjacency polytopes, which arise in the study of systems of interconnected oscillators.
The first author, together with Chen and others, have recently studied their combinatorial structures and triangulations with a view towards their use in homotopy continuation methods \cite{DavisChenKorchevskaia, ChenDavisMehta, DavisChen}.
Their work has sparked a flurry of results \cite{BraunBruegge, ManyFaces, DAliEtAl, OhsugiTsuchiyaPV, OhsugiTsuchiyaPQ} using various algebraic and combinatorial techniques, often under the name of \emph{symmetric edge polytopes}, which were studied previously with a view towards Ehrhart-theoretic results \cite{HigashitaniJochemkoMichalek, MatsuiEtAl}.

To state the main results of this article, we must define several additional terms.
The \emph{dimension} of a polytope, denoted $\dim(P)$, is the dimension of the affine linear subspace spanned by $P$.
One of the crucial pieces of information regarding $\APQ_G$ is its \emph{normalized volume}, $\NVol(\APQ_G) = \dim(\APQ_G)!\vol(\APQ_G)$, where $\vol(P)$ is the (relative) Euclidean volume of $P$.
Part of what makes this value of interest is the fact that, if $P$ is the convex hull of points in $\ZZ^N$, then $\NVol(P)$ is always a positive integer.
Computing the normalized volume can then be approached by attempting to find a set $S$ for which $\NVol(P) = |S|$, ideally so that the structure of $S$ allows for a more feasible computation of $|S|$.
This article uses such an approach, first implemented in this setting in \cite{DavisChen}, to produce formulas for $\NVol(\APQ_G)$ for several classes of graphs.
Our main results are the following.

\setcounter{section}{2}
\setcounter{thm}{8}

\begin{thm}\label{thm: generalized triangle rec}
	Let $N \geq 2$.
	If $\calM$ is an $M$-element matching of $K_N$, then
	\[
		\NVol(\APQ_{K_N \triangle \calM}) = 3^M\binom{2(N-1)}{N-1}.
	\]
\end{thm}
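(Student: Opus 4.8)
The plan is to induct on the number $M$ of edges in $\calM$, with the complete graph $K_N$ as the base case and a single-edge version of the triangle recurrence as the inductive step. Here I read $K_N \triangle \calM$ as the graph obtained from $K_N$ by attaching, for each edge $\{u,v\} \in \calM$, a new vertex $w$ adjacent to both $u$ and $v$, so that $\{u,v,w\}$ spans a triangle. Throughout I would work inside the combinatorial model recalled in the introduction, writing $\NVol(\APQ_G) = |S_G|$ for the set $S_G$ of nonnegative integer sequences whose restrictions are determined by $G$; the theorem then reduces to the enumerative identity $|S_{K_N \triangle \calM}| = 3^M |S_{K_N}|$.

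For the base case $\calM = \varnothing$ I would note that $\APQ_{K_N} = \conv\{(\e_i,\e_j) \mid i,j \in [N]\} = \Delta_{N-1} \times \Delta_{N-1}$, the product of two $(N-1)$-simplices. The normalized volume of this product is the number of maximal cells in its staircase triangulation, namely $\binom{2(N-1)}{N-1}$; equivalently, one verifies directly that $|S_{K_N}|$ is this central binomial coefficient. This disposes of $M = 0$.

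For the inductive step I would isolate one matching edge $e = \{u,v\}$ together with the vertex $w$ that the triangle operation attaches to it, and prove the single-edge recurrence $|S_{G \triangle e}| = 3\,|S_G|$. The mechanism I expect is a partition of $S_{G \triangle e}$ according to the local configuration at $w$ within the triangle $uvw$: the three admissible states at $w$ should each restrict to a full copy of $S_G$, so that every fiber of the forgetful map $S_{G \triangle e} \to S_G$ (erasing the data at $w$) has exactly three elements. Because $\calM$ is a matching, the triangles attached at distinct edges are pairwise vertex-disjoint, so the states introduced at the successive new vertices $w_1,\dots,w_M$ do not interfere; this independence is precisely what allows the single-edge recurrence to be iterated $M$ times and produce the factor $3^M$.

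The step I expect to be the main obstacle is the single-edge factor of $3$ in this level of generality. Chen and the first author established their triangle recurrence under more restrictive hypotheses on the location of the attached triangle, so the crux is to check that the three-state partition at $w$ remains valid when $e$ is an arbitrary matching edge of $K_N$, and to confirm that the vertex-disjointness of $\calM$ makes the recurrences at different new vertices genuinely independent rather than merely applicable in sequence. Once the fibers of $S_{G \triangle e} \to S_G$ are shown to have size $3$ and this independence is justified, the theorem follows by combining the base case with $M$ applications of the recurrence.
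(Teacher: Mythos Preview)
Your overall plan---induct on $M$, use the product-of-simplices volume $\binom{2(N-1)}{N-1}$ for the base case $K_N$, and peel off one matching edge at a time to gain a factor of $3$---is exactly the paper's strategy, and you correctly flag the single-edge step as the real obstacle, since the earlier triangle recurrence requires a degree-$2$ hypothesis on an endpoint of $e$ that fails here.

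Where your sketch departs from what actually works is the mechanism: a literal ``forgetful map $S_{G\triangle e}\to S_G$ erasing the data at $w$'' does not land in $\fkD(G)$. If $d_w=0$ the truncated sequence has entry sum $N+M$ rather than $N+M-1$, and if $d_w=2$ it has sum $N+M-2$; in neither case is the result a $D(G)$-draconian sequence. The paper instead runs three explicit injections $\alphatri,\betatri,\gammatri\colon \fkD(K_N\triangle\calM)\to\fkD(K_N\triangle\calM')$ in the forward direction, with $\alphatri(c)=(c,1)$, $\betatri(c)=(c,1)+\e_u-\e_{w_e}$, and $\gammatri$ a two-case map that shifts $(c,1)$ by either $\e_v-\e_{w_e}$ or $\e_{w_e}-\e_u$. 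The technical core is a new lemma showing $\gammatri$ still lands in $\fkD(K_N\triangle\calM')$---this is where the matching hypothesis enters, via the fact that $u,v$ are untouched $K_N$-vertices whose $D$-neighborhoods already contain all of $[\overline N]$---followed by a case analysis on $d_{w_e}\in\{0,1,2\}$ proving that the three images cover $\fkD(K_N\triangle\calM')$. So your intuition that the three ``states at $w$'' organize the argument is right, but each of the bijections back to $\fkD(G)$ must also adjust the $u$- or $v$-coordinate, and verifying the draconian inequalities after those adjustments is precisely the work you have not yet done.
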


\setcounter{section}{3}
\setcounter{thm}{1}
\begin{thm}\label{thm: delete path from K_N}
	Let $N \geq 4$ and $0 \leq M < N$.
	If $P$ is a length-$M$ path in $K_N$, then
	\[
		\NVol(\APQ_{K_N \setminus E(P)}) = \binom{2(N-1)}{N-1} - (2N - 4)(M - 1) + 4.
	\]
\end{thm}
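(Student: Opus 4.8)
The plan is to carry out the entire argument inside the sequence-counting model of \cite{DavisChen}, which expresses $\NVol(\APQ_G)$ as the number of certain sequences of nonnegative integers whose admissible values are constrained edge-by-edge by $G$. Write $P = v_0 v_1 \cdots v_M$, so that $K_N \setminus E(P)$ is obtained from $K_N$ by deleting the consecutive edges $e_t = v_{t-1}v_t$ for $t = 1,\dots,M$, and for $0 \le m \le M$ let $P_m = v_0 v_1 \cdots v_m$ denote the initial length-$m$ sub-path. Using the baseline $\NVol(\APQ_{K_N}) = \binom{2(N-1)}{N-1}$ (the $M=0$ instance of Theorem~\ref{thm: generalized triangle rec}), and the fact that deleting edges only tightens the model's restrictions, the statement is equivalent to showing that deleting all of $E(P)$ removes exactly $(2N-4)(M-1) - 4$ admissible sequences relative to $K_N$. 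I would establish this by induction on $M$, peeling off one path-edge at a time.

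The inductive step is the assertion that, for $M \geq 3$, passing from $K_N \setminus E(P_{M-1})$ to $K_N \setminus E(P_M)$ by deleting the single edge $e_M = v_{M-1}v_M$ changes the count by exactly $-(2N-4)$. To see this I would identify the affected sequences as precisely those admissible for $K_N \setminus E(P_{M-1})$ that rely on the transition encoded by $e_M$, and count them directly. The key structural point is locality: the constraints attached to $e_M$ involve only $e_M$ and the edges sharing a vertex with it, and among the already-deleted edges $e_1,\dots,e_{M-1}$ the only one adjacent to $e_M$ is $e_{M-1}$ (through the shared vertex $v_{M-1}$), while $v_M$ still has full degree in $K_N \setminus E(P_{M-1})$. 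Consequently the marginal count does not depend on $M$, and I expect it to evaluate to $2N-4 = 2(N-2)$, corresponding to the two orientations of $e_M$ paired with the $N-2$ vertices of $K_N$ other than $v_{M-1}$ and $v_M$. This constancy is exactly what forces the correction to be linear in $M$.

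The induction is anchored at the two-edge path $M=2$, where I would compute directly that deleting $e_1$ and $e_2$ (which meet at $v_1$) removes $2N-8$ sequences rather than a full marginal unit $2N-4$. This one-time deficit of $4$, caused by the interaction at the shared vertex $v_1$ together with the two path-endpoints $v_0$ and $v_2$ having degree one in $P$, is precisely the source of the additive constant in the formula: telescoping the decrements gives a removed count of $(2N-8) + (M-2)(2N-4) = (2N-4)(M-1) - 4$, as required. The degenerate values $M = 0,1$ lie outside this regime and would be checked separately.

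The main obstacle is the local marginal count, and in particular the bookkeeping at the shared vertex of two consecutive deleted edges. I must show that the sequences newly forbidden at each step are genuinely new -- disjoint from those forbidden at earlier steps once the correct identification is made -- so that the per-step decrements simply add, and I must pin down the base value $2N-8$, where the endpoint effects first appear. Establishing these two facts, namely the constancy of the step and the exact base count, reduces the theorem to the telescoping sum above.
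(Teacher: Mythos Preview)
Your plan is different from the paper's. The paper does not induct on $M$; instead it proves a lemma (Lemma~\ref{lem: deleting path}) that identifies $\fkD(K_N)\setminus\fkD(K_N\setminus E(P))$ in one shot as $\calP_1\cup\calP_2$, where $\calP_1$ consists of the sequences with an entry $N-2$ at an interior path-vertex and $\calP_2$ consists of the sequences supported on a pair $\{v_i,v_{i+2}\}$ of path-vertices two apart, and then finishes by inclusion--exclusion on $|\calP_1|$, $|\calP_2|$, $|\calP_1\cap\calP_2|$. Your edge-by-edge peeling is a legitimate alternative, and the marginal decrement for $M\geq 3$ really is the constant $2N-4$. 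What each approach buys: the paper's gives a closed description of exactly which sequences are lost, while yours explains the linearity in $M$ structurally and avoids the somewhat fiddly computation of $|\calP_1\cap\calP_2|$.

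Two points in your sketch need correction, however. First, the mechanism you propose for the value $2N-4$ (``two orientations of $e_M$ paired with the $N-2$ other vertices'') is not what actually happens. When you delete $e_M=v_{M-1}v_M$, the only draconian inequalities whose right-hand side drops are those for the index sets $\{v_{M-1}\}$, $\{v_M\}$, and $\{v_{M-2},v_M\}$; any larger set recovers the missing $\overline{v_{M-1}}$ or $\overline{v_M}$ through some other member. Counting the newly forbidden sequences from each (for $M\geq 3$) gives $N-2$, $1$, and $N-3$ respectively, summing to $2N-4$. Your locality intuition is right, but the count is a sum of three unlike pieces, not a $2\times(N-2)$ product, and you will need this finer breakdown to check disjointness from earlier steps.

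Second, your base value is off. You read the target as removing $(2N-4)(M-1)-4$ sequences and hence predict $2N-8$ at $M=2$, but the direct count there (with $v_{M-3}$ now absent) gives $(N-1)+1+(N-2)=2N-2$ for the second edge and $2$ for the first, so $2N$ total. Telescoping then yields $(2N-4)(M-1)+4$ removed sequences, so the additive constant enters with the opposite sign from what you wrote. Once you actually carry out the base computation you will see this; the inductive strategy itself is sound.
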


\setcounter{thm}{4}
\begin{thm}\label{thm: delete cycle from K_N}
	Let $N \geq 5$ and $0 \leq M \leq N$.
	Denote by $E_M^{\circ}$ the set of edges of an $M$-cycle in $K_N$.
	For all such choices of $N$ and $M$ and any $E_M^{\circ}$,
	\[
		\NVol(\APQ_{K_N \setminus E_M^\circ}) =
			\begin{cases} 
				\binom{2(N-1)}{N-1} - 2M(N-2) &  \text{ if }M \neq 4 \\
				\binom{2(N-1)}{N-1} - 2(N+1)(N-2) & \text{ if } M = 4.
			\end{cases}
	\]
\end{thm}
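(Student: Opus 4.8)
The plan is to work entirely within the sequence-counting model of \cite{DavisChen}, which for a connected graph $G$ on $[N]$ identifies $\NVol(\APQ_G)$ with the cardinality of an explicit set of nonnegative integer sequences whose admissible steps are dictated by the edges of $G$; deleting an edge from $G$ simply forbids those sequences that rely on the corresponding step. Since $N \geq 5$ and $M \leq N$, the graph $K_N \setminus E_M^\circ$ remains connected (removing a single cycle cannot disconnect $K_N$ in this range), so the model applies and I may write $\NVol(\APQ_{K_N \setminus E_M^\circ}) = \NVol(\APQ_{K_N}) - D_M$, where $D_M$ counts the admissible sequences for $K_N$ that use at least one of the $M$ deleted edges. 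As $\NVol(\APQ_{K_N}) = \binom{2(N-1)}{N-1}$ is already known, the whole problem reduces to computing $D_M$.

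The first main step is to evaluate $D_M$ by inclusion--exclusion over the $M$ deleted edges. I would first check that a single deleted edge forbids a number of sequences independent of which edge it is (by the vertex-transitivity of $K_N$), and then argue that the only nonvanishing higher-order intersection terms arise from pairs of deleted edges sharing a vertex, i.e. consecutive edges around the cycle; the cyclic symmetry of $E_M^\circ$ makes all such contributions equal, collapsing the inclusion--exclusion to a short closed form. The most transparent way to organize this is to compare the cycle with the path already treated in Theorem~\ref{thm: delete path from K_N}: an $M$-cycle is a length-$(M-1)$ path $P'$ together with a single edge closing up its two endpoints, both living on the same $M$ vertices. Substituting into the path formula, I expect the marginal effect of the closing edge to be the removal of exactly $4(N-1)$ further sequences, giving the identity
\[
\NVol(\APQ_{K_N \setminus E_M^\circ}) = \NVol(\APQ_{K_N \setminus E(P')}) - 4(N-1),
\]
which unwinds to the stated $\binom{2(N-1)}{N-1} - 2M(N-2)$ whenever the inclusion--exclusion is generic (one should separately confirm that the boundary values and the triangle case $M=3$, where all deleted edges are pairwise adjacent, conform to this).

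The hard part, and the source of the case distinction, is $M = 4$. Here the four deleted edges $12, 23, 34, 41$ leave, among the cycle's own vertices, only the two chords $13$ and $24$ -- a perfect matching on $\{1,2,3,4\}$. This is the unique length for which the deleted cycle induces a pair of disjoint \emph{surviving} edges: for $M=3$ nothing survives on the cycle vertices, while for $M \geq 5$ the induced graph is connected. I expect this $K_{2,2}$-type crossing structure to create an extra family of admissible $K_N$-sequences that are simultaneously blocked by two opposite edges of the $4$-cycle in a manner the generic inclusion--exclusion fails to register, so that $D_4$ exceeds its generic value by precisely $2(N-2)(N-3)$. Isolating this family explicitly and verifying its size -- the factor $(N-2)(N-3)$ counting the ordered choices of two remaining vertices and the factor $2$ recording the two opposite-edge pairs -- is the crux of the whole argument. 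Subtracting this additional $2(N-2)(N-3)$ from the generic formula then yields $\binom{2(N-1)}{N-1} - 2(N+1)(N-2)$, completing the $M=4$ case and hence the theorem.
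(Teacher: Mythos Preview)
Your overall strategy---compute $D_M = |\fkD(K_N)| - |\fkD(K_N\setminus E_M^\circ)|$---matches the paper, but the mechanism you propose for evaluating $D_M$ does not work as stated, and the paper proceeds differently.

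The inclusion--exclusion over deleted edges is not well-posed in the draconian model. A sequence is lost precisely when it violates some inequality $c_{i_1}+\cdots+c_{i_k} \ge |\bigcup_j \calN_{D(G)}(i_j)|$, and these inequalities are indexed by vertex subsets, not edges. In particular, the set of sequences lost upon removing all of $E_M^\circ$ is \emph{not} the union of the sets lost upon removing each edge separately: removing two edges can shrink a neighbourhood union below a threshold that neither single deletion reaches. Your alternative route via the path formula yields the correct marginal count $4(N-1)$, but this number is asserted, not derived; actually identifying which sequences are newly lost when the path is closed is delicate (for example, $(N-2)\e_{v_1}+\e_{v_3}$ is already lost for the open path, so one cannot simply count sequences with $c_{v_1}=N-2$).

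The paper instead classifies $\fkD(K_N)\setminus\fkD(K_N\setminus E_M^\circ)$ directly by which draconian inequality fails. A short case analysis shows the only violable inequalities are: (i) $c_{v_i}\ge N-2$ for a single cycle vertex, giving a set $\scrXcir$ of size $MN$; and (ii) $c_{v_i}+c_{v_{i+2}}\ge N-1$ for vertices two apart on the cycle, giving a set $\scrYcir$ of size $M(N-4)$. Summing yields $2M(N-2)$. The $M=4$ anomaly is that opposite cycle vertices $v_i,v_{i+2}$ then have \emph{identical} neighbourhoods in $D(K_N\setminus E_4^\circ)$ (each misses exactly $\overline{v_{i+1}}$ and $\overline{v_{i+3}}=\overline{v_{i-1}}$), so $|\calN(v_i)\cup\calN(v_{i+2})|=N-2$ rather than $N-1$; this admits a third family $\scrZcir$ of sequences with $c_{v_i}+c_{v_{i+2}}=N-2$ and one further unit elsewhere. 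Your description of the factors in $|\scrZcir|=2(N-3)(N-2)$ is off: the $2$ counts the two diagonal pairs $\{v_0,v_2\},\{v_1,v_3\}$, the $N-3$ counts compositions of $N-2$ into two positive parts, and the $N-2$ counts placements of the remaining unit---not ``ordered choices of two remaining vertices.''
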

In Section~\ref{sec: background}, we give a brief overview of how computing normalized volumes of $\NVol(\APQ_G)$ can be interpreted as a combinatorial problem.
Within this section, we adapt an argument given in \cite{DavisChen} to prove Theorem~\ref{thm: generalized triangle rec}. 
Section~\ref{sec: deleting from complete} examines how $\NVol(\APQ_G)$ relates to $\APQ_{K_N}$ when $G$ is obtained from $K_N$ by deleting the edges of a path or a cycle. 
It is in that section that we prove Theorems~\ref{thm: delete path from K_N} and \ref{thm: delete cycle from K_N}.

\setcounter{section}{1}
\setcounter{thm}{1}
\section{Background}\label{sec: background}

One key insight provided in \cite{DavisChen} is that $\NVol(\APQ_G)$ can be computed by determining the number of sequences of positive integers satisfying conditions arising from $G$.
To describe the sequences and the constraints placed upon them, we need to first define a number of notions.

For a positive integer $N$, let $[\overline{N}] = \{\overline 1,\dots, \overline N\}$, and define $K_{N,\overline N}$ to be the complete bipartite graph with partite sets $[N]$ and $[\overline N]$.
If $G$ is a graph and $v$ is a vertex of $G$, then we denote by $\calN_G(v)$ the \emph{neighbors} of $v$ in $G$. 
When $G \subseteq K_{N, \overline N}$, we say a sequence $(c_1,\dots,c_N) \in \ZZ_{\geq 0}^N$ is \emph{$G$-draconian sequence} if $\sum c_i = N-1$ and, for any choice of indices $1 \leq i_1 < \cdots < i_k \leq N$, the inequality
\begin{equation}\label{eq: draconian inequality}
	c_{i_1} + \cdots + c_{i_k} < \left|\bigcup_{j=1}^k \calN_G(i_j)\right|
\end{equation}
is satisfied.
We will often call the inequality \eqref{eq: draconian inequality} the \emph{$G$-draconian inequality corresponding to $i_1,\dots,i_k$}, or corresponding to $c_{i_1},\dots,c_{i_k}$.
Often, the graph $G$ and the indices $i_1,\dots,i_k$ are understood from context, and we simply call \eqref{eq: draconian inequality} a \emph{draconian inequality}.
For similar reasons, if $G$ is either understood from or irrelevant to the discussion, we may call $c$ a \emph{draconian sequence}. 

Draconian sequences were first studied by Postnikov in relation to a large class of polytopes called \emph{generalized permutohedra} and a generalization of Hall's Matching Theorem \cite{Postnikov2009}.
He establishes formulas for normalized volumes of generalized permutohedra in terms of draconian sequences, though these formulas are generally nonalgebraic.
Unfortunately, determining the number of $G$-draconian sequences for an arbitrary $G \subseteq K_{N,\overline{N}}$ is computationally expensive, and typically requires determining the set of draconian sequences themselves.
In the context of finding solutions to power-flow equations, therefore, finding algebraic formulas for the number of draconian sequences, when possible, is a significant improvement.

Since draconian sequences are only associated to bipartite graphs, we must relate an arbitrary simple graph to a bipartite graph in a controlled way.
For a simple graph $G$ on $[N]$, define $D(G)$ to be the subgraph of $K_{N,\overline N}$ whose edges are $\{i, \overline i\}$ for each $i \in [N]$ and $\{i, \overline j\}$ and $\{j, \overline i\}$ for each edge $ij$ in $G$.
Denote by $\fkD(G)$ the set of $D(G)$-draconian sequences.
The following result is a crucial connection between type-PQ adjacency polytopes and draconian sequences.

\begin{thm}[{\cite[Theorem 2.8]{DavisChen}}]\label{thm: translation}
	For any connected graph $G$ on $[N]$, $\NVol(\APQ_G) = |\fkD(G)|$.
\end{thm}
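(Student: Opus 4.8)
The plan is to realize $\APQ_G$ as the root polytope of the bipartite graph $D(G)$ and then to invoke Postnikov's enumeration \cite{Postnikov2009} of the normalized volume of such a polytope by draconian sequences. First I would identify $\RR^{2N}$ with $\RR^{[N] \sqcup [\overline N]}$ by sending the second block of coordinates to the barred indices, so that each generating point $(\e_i, \e_j)$ becomes the incidence vector $\e_i + \e_{\overline j}$ of the edge $\{i, \overline j\}$. Under this identification the set of points defining $\APQ_G$ is exactly $\{\e_i + \e_{\overline j} : \{i, \overline j\} \in E(D(G))\}$: the diagonal points $(\e_i, \e_i)$ supply the matching edges $\{i, \overline i\}$, while an off-diagonal pair arising from $ij \in E(G)$ supplies $\{i, \overline j\}$ and $\{j, \overline i\}$. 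Thus $\APQ_G$ is precisely the root polytope of $D(G)$.

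Next I would pin down the dimension, which is where connectedness enters. Since $G$ is connected, so is $D(G)$: the matching edge $\{i, \overline i\}$ moves between $i$ and $\overline i$, and an edge $ij$ of $G$ joins $i$ to $\overline j$ (hence to $j$), so all of $[N] \sqcup [\overline N]$ lies in a single component. Every vertex $\e_i + \e_{\overline j}$ satisfies $\sum_{i \in [N]} x_i = 1$ and $\sum_{\overline j \in [\overline N]} x_{\overline j} = 1$, so $\APQ_G$ lies in the intersection of these two hyperplanes; connectedness of $D(G)$ guarantees that it is full-dimensional there, giving $\dim \APQ_G = 2N - 2$ and realizing $\APQ_G$ as a full-dimensional subpolytope of $\Delta_{N-1} \times \Delta_{N-1}$.

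The main step is the volume computation. The unsigned incidence matrix of a bipartite graph is totally unimodular, so the edge vectors $\e_i + \e_{\overline j}$ form a unimodular configuration: in any triangulation of $\APQ_G$ using only these points, every maximal simplex has normalized volume $1$, and hence $\NVol(\APQ_G)$ equals the number of maximal simplices. Postnikov's triangulation of the root polytope of a bipartite graph \cite{Postnikov2009} has maximal cells in bijection with the draconian sequences of that graph---the Hall-type inequalities \eqref{eq: draconian inequality} together with the total-sum condition $\sum c_i = N-1$ are exactly the data indexing the top cells. Counting cells therefore yields $\NVol(\APQ_G) = |\fkD(G)|$.

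I expect the genuine obstacle to be the volume formula of the preceding paragraph: constructing the unimodular triangulation and verifying the precise bijection between its maximal simplices and $D(G)$-draconian sequences is the substance of \cite{Postnikov2009}, and one must check that the draconian condition as stated here matches Postnikov's convention (the strict inequalities, and the sum $N-1$ forced by the $N$ barred vertices). By contrast, the root-polytope identification and the dimension count are routine once this viewpoint is adopted; the weight of the theorem rests on importing Postnikov's result correctly.
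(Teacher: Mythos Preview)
The paper does not prove this theorem; it is quoted verbatim from \cite[Theorem~2.8]{DavisChen} and used as a black box, so there is no in-paper argument against which to compare your proposal.

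That said, your outline is the natural reconstruction of how such a result is obtained and is almost certainly what \cite{DavisChen} does: one recognizes that mapping $(\e_i,\e_j)\mapsto \e_i+\e_{\overline j}$ identifies $\APQ_G$ with the root polytope $Q_{D(G)}=\conv\{\e_i+\e_{\overline j}:\{i,\overline j\}\in E(D(G))\}$ sitting inside $\Delta_{N-1}\times\Delta_{N-1}$, and then one appeals to Postnikov's machinery \cite{Postnikov2009}. Your dimension argument via connectedness of $D(G)$ is correct and is exactly the reason the hypothesis ``$G$ connected'' appears. Your identification of the volume with a count of draconian sequences is also right in spirit, though the precise mechanism in \cite{Postnikov2009} is slightly different from what you describe: Postnikov expresses the volume of a trimmed generalized permutohedron (of which $Q_{D(G)}$ is an instance) as a sum over draconian sequences, not literally as the number of top cells in a triangulation indexed by those sequences. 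In the present situation each summand equals~$1$ (by unimodularity of the bipartite incidence configuration), so the sum collapses to a cardinality and your conclusion $\NVol(\APQ_G)=|\fkD(G)|$ is correct. The only point to tighten, were you to write this out in full, is to cite the specific statement in \cite{Postnikov2009} that gives the draconian-sequence expansion and to verify that his normalization of the inequalities (strict, with right-hand side the neighbor count in the barred part) matches the one adopted here---you already flagged this as the genuine content, and it is.
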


We note that if $G$ is disconnected, then the above result may not hold, but an alternate approach \cite[Proposition 3.2]{DavisChen} shows that $\NVol(\APQ_G)$ is a product of factors of the form $|\fkD(G_i)|$ where each $G_i$ is a connected component of $G$. 
Thus, we may always assume without loss of generality that $G$ is connected.
A convenient result that we will also frequently use is the following.

\begin{rmk}[{\cite[Remark 2.9]{DavisChen}}]\label{rmk: relabeling invariant}
	The normalized volume of $\APQ_G$ is invariant under permutation of vertices.
	Hence, $|\fkD(G)|$ is also invariant under relabeling of the vertices of $G$.
\end{rmk}


\subsection{Extending the triangle recurrence}\label{sec: extending triangles}

Let $G$ be a connected graph and let $e = uv$ be an edge of $G$.
We denote the vertex set of $G$ by $V(G)$.
In \cite{DavisChen}, the authors defined the construction $G \triangle e$ to be the graph on the vertex set $V(G) \cup \{w_e\}$, where $w_e$ is a new vertex, and the edge set $E(G) \cup \{uw_e, vw_e\}$.
Under certain conditions, we can describe $\NVol(\APQ_{G \triangle e})$ in terms of $\NVol(\APQ_G)$.

\begin{thm}[{\cite[Theorem 3.18]{DavisChen}}]\label{thm: triangle recurrence}
	Let $G$ be a connected graph on $[N]$ for which $e = uv$ is an edge with $\deg_G(u) = 2$.
	If $\deg_G(v) = 2$ or if the neighbors of $u$ are neighbors of each other, then
	\begin{equation}\label{eq: triple}
		\NVol(\APQ_{G \triangle e}) = 3\NVol(\APQ_G).
	\end{equation}
\end{thm}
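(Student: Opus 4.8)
The plan is to translate the claim into a statement about draconian sequences and then build a three-to-one correspondence. Since $G$ and $G\triangle e$ are both connected, Theorem~\ref{thm: translation} reduces \eqref{eq: triple} to the identity $|\fkD(G\triangle e)|=3\,|\fkD(G)|$. Write $w=w_e$ for the new vertex and let $\calN_G(u)=\{v,x\}$, using $\deg_G(u)=2$. First I would record how $D(G\triangle e)$ differs from $D(G)$: the only new vertices are $w$ and $\overline w$; one has $\calN_{D(G\triangle e)}(w)=\{\overline u,\overline v,\overline w\}$; the vertex $\overline w$ is adjoined to $\calN(u)$ and to $\calN(v)$; and all other neighborhoods are unchanged. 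Two structural facts then drive everything. Because $u\sim v$ and $w\sim u,v$, we have $\calN_{D(G\triangle e)}(w)\subseteq\calN_{D(G\triangle e)}(u)\cap\calN_{D(G\triangle e)}(v)$, so adjoining $w$ to any index set already containing $u$ or $v$ leaves the right-hand neighborhood unchanged; and the singleton inequality gives $c_w<|\calN_{D(G\triangle e)}(w)|=3$, forcing $c_w\in\{0,1,2\}$. This trichotomy is the source of the factor $3$.

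I would then set up a map $\Psi\colon\fkD(G\triangle e)\to\fkD(G)$ that deletes the coordinate $c_w$ and transfers one unit of mass onto a neighbor of $w$, dropping the coordinate sum from $N$ to $N-1$: concretely, replace $c_u$ by $c_u+c_w-1$, and instead decrement $c_v$ in the boundary case $c_u=c_w=0$. Organizing preimages by the value of $c_w$ reveals the intended structure: the fiber over $d\in\fkD(G)$ should consist of one sequence of each type (a) $c_w=1$ and $c|_{[N]}=d$; (b) $c_w=0$ and $c|_{[N]}=d+\e_j$; and (c) $c_w=2$ and $c|_{[N]}=d-\e_j$, for a single coordinate $j\in\{u,v\}$. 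The easy half is type (a): for every $S\subseteq[N]$ the inequality $\sum_{i\in S}d_i<|\calN_{D(G)}(S)|$ follows from the $G\triangle e$ inequality for $S$ when $S\cap\{u,v\}=\emptyset$, and from the one for $S\cup\{w\}$ otherwise, since then $\calN_{D(G\triangle e)}(w)\subseteq\calN_{D(G\triangle e)}(S)$ while $\overline w$ supplies the single extra unit on the right. A companion computation, applying the $G\triangle e$ inequality to the set $[N]\setminus\{u,v\}$, shows that $c_u=c_v=c_w=0$ is impossible, which keeps the transfer rule inside the nonnegative orthant.

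The heart of the argument is to show that each fiber contains exactly one admissible sequence of type (b) and one of type (c); equivalently, that $\Psi$ is well defined into $\fkD(G)$ and is exactly three-to-one. The only draconian inequalities that are not automatic are those indexed by a set $S$ meeting $\{u,v\}$ in a single vertex while $c_w=0$, where the transferred inequality is a priori one unit too weak. To control these I would compute, for a tight such $S$ with $v\in S$ and $u\notin S$, the $G\triangle e$ inequality for $S\cup\{u\}$; using $\calN_{D(G\triangle e)}(u)=\{\overline u,\overline v,\overline x,\overline w\}$ this collapses to
\[
	c_u<1+\bigl[\,\overline x\notin\calN_{D(G\triangle e)}(S)\,\bigr],
\]
where the bracket is the Iverson symbol. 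Here the hypothesis does its work. In Case B the neighbors $v$ and $x$ of $u$ are adjacent, so $v\in S$ forces $\overline x\in\calN_{D(G\triangle e)}(S)$ and hence $c_u<1$; thus no obstructing tight $S$ survives in the branch transferring mass onto $u$, and the mirror-image statement handles the branch at $v$.

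The step I expect to be the main obstacle is Case A, in which $\deg_G(v)=2$ but possibly $v\not\sim x$, so the bracketed indicator above can equal $1$ and the quick contradiction evaporates. There I would exploit the symmetry created by $\deg_G(u)=\deg_G(v)=2$: the vertices $u$ and $v$ are interchangeable, so one can choose the transfer target adaptively and rerun the tightness computation with $u$ and $v$ swapped, now reading $\calN_{D(G\triangle e)}(v)=\{\overline u,\overline v,\overline y,\overline w\}$ for the second neighbor $y$ of $v$, to preclude simultaneous obstructions on the two sides and to fix the boundary decrement canonically. Carrying out this bookkeeping over the subsets surrounding the triangle $\{u,v,w\}$ — rather than the construction of $\Psi$, which is routine — is where the real work lies. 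Once every $d\in\fkD(G)$ is shown to have exactly the three preimages (a), (b), (c), we obtain $|\fkD(G\triangle e)|=3\,|\fkD(G)|$, and \eqref{eq: triple} follows.
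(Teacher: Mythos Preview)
This theorem is not proved in the present paper; it is quoted from \cite{DavisChen} as background. What the paper does record, in Lemmas~\ref{lem: first rec lem tri}--\ref{lem: third rec lem tri}, is the skeleton of that proof: three injections $\alphatri,\betatri,\gammatri\colon\fkD(G)\hookrightarrow\fkD(G\triangle e)$ whose images partition $\fkD(G\triangle e)$. Your plan is this same argument read in the opposite direction. The fiber types match the three maps exactly: type~(a) is the image of $\alphatri$; type~(b) with $j=u$ is the image of $\betatri$; and the remaining sequences (type~(b) with $j=v$, or type~(c)) form the image of $\gammatri$, whose two-case definition is precisely the adaptive choice of transfer target you propose for Case~A. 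Your map $\Psi$ is simply the common left inverse of $\alphatri,\betatri,\gammatri$.

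One expository wrinkle worth flagging: under your rule for $\Psi$, a sequence $d\in\fkD(G)$ with $d_u=0$ has \emph{two} preimages with $c_w=0$, namely $(d+\e_u,0)$ and $(d+\e_v,0)$, and none with $c_w=2$; so ``exactly one of type~(b) and one of type~(c)'' is not literally what your map produces, though the total fiber size is still three. This is exactly the bookkeeping that forces the case split in the definition of $\gammatri$, and you will need the same split made explicit to carry through the Case~A symmetry argument.
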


We refer to the above recurrence as \emph{the triangle recurrence}.
Experimental data suggests that the conditions on the triangle recurrence can be relaxed considerably, although a complete characterization of when \eqref{eq: triple} holds remains elusive.
The triangle recurrence relies on several lemmas, some of which we will need, and therefore state below.

\begin{lem}[{\cite[Lemma 3.14]{DavisChen}}]\label{lem: first rec lem tri}
Let $G$ be any connected graph on $[N]$ and $e$ any edge.
	If $c \in \fkD(G)$, then $\alphatri(c) \in \fkD(G\triangle e)$ where $\alphatri(c) = (c,1)$.
	Moreover, $\alphatri$ is injective. \qed
\end{lem}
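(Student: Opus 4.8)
The plan is to verify the two assertions separately, with the containment $\alphatri(c) \in \fkD(G \triangle e)$ being the substantive part and injectivity essentially immediate. First I would fix notation by labeling the new vertex $w_e$ as $N+1$, so that $G \triangle e$ has vertex set $[N+1]$ and $\alphatri(c) = (c,1) = (c_1,\dots,c_N,1) \in \ZZ_{\geq 0}^{N+1}$. The essential preliminary step is to record exactly how $D(G \triangle e)$ differs from $D(G)$: the only edges of $K_{N+1,\overline{N+1}}$ present in $D(G\triangle e)$ but absent from $D(G)$ are those incident to the new indices $N+1$ and $\overline{N+1}$, namely $\{N+1,\overline{N+1}\}$, $\{u,\overline{N+1}\}$, $\{v,\overline{N+1}\}$, $\{N+1,\overline u\}$, and $\{N+1,\overline v\}$. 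Consequently $\calN_{D(G\triangle e)}(i) = \calN_{D(G)}(i)$ for $i \in [N]\setminus\{u,v\}$, while $\calN_{D(G\triangle e)}(u) = \calN_{D(G)}(u) \cup \{\overline{N+1}\}$ and likewise for $v$, and finally $\calN_{D(G\triangle e)}(N+1) = \{\overline u, \overline v, \overline{N+1}\}$.

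With this in hand, the sum condition is immediate: since $\sum_i c_i = N-1$, the entries of $(c,1)$ sum to $N = (N+1)-1$, as required. It then remains to check the draconian inequality \eqref{eq: draconian inequality} for $(c,1)$ against every nonempty index set $S \subseteq [N+1]$, and I would split into two cases according to whether $N+1 \in S$. If $N+1 \notin S$, then $S \subseteq [N]$ and the left-hand side is $\sum_{i \in S} c_i$; because neighbor sets only grow under $\triangle e$, we have $\bigcup_{i \in S}\calN_{D(G)}(i) \subseteq \bigcup_{i \in S}\calN_{D(G\triangle e)}(i)$, so the $D(G)$-draconian inequality for $S$ (valid since $c \in \fkD(G)$) immediately gives the desired bound.

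The main (and only) obstacle is the case $N+1 \in S$, where the left-hand side carries the extra $+1$ contributed by the appended entry; the key point will be that this $+1$ is precisely absorbed by the genuinely new neighbor $\overline{N+1}$. Writing $S = T \cup \{N+1\}$ with $T \subseteq [N]$, I would first dispose of $T = \emptyset$ directly: there $S = \{N+1\}$, the left-hand side is $1$, and $\calN_{D(G\triangle e)}(N+1) = \{\overline u, \overline v, \overline{N+1}\}$ has three elements since $u \neq v$, so $1 < 3$. For $T \neq \emptyset$, set $A = \bigcup_{i \in T}\calN_{D(G)}(i)$; the $D(G)$-draconian inequality gives $\sum_{i\in T} c_i < |A|$, i.e.\ $\sum_{i \in T} c_i + 1 \leq |A|$. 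By the neighbor computation above, $\bigcup_{i \in S}\calN_{D(G\triangle e)}(i) = A \cup \{\overline u, \overline v, \overline{N+1}\}$, and since $\overline{N+1} \notin A \subseteq [\overline N]$ this union has at least $|A|+1$ elements. Combining these bounds yields
\[
	\sum_{i \in S}(c,1)_i = \sum_{i \in T} c_i + 1 \leq |A| < |A| + 1 \leq \left|\bigcup_{i \in S}\calN_{D(G\triangle e)}(i)\right|,
\]
which is the required strict inequality, establishing $(c,1) \in \fkD(G \triangle e)$.

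Finally, injectivity of $\alphatri$ is trivial, since $c$ is recovered from $(c,1)$ by deleting the last coordinate; hence $\alphatri(c) = \alphatri(c')$ forces $c = c'$.
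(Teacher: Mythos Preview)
Your proof is correct. The paper does not supply its own argument for this lemma---it is quoted verbatim from \cite[Lemma 3.14]{DavisChen} and closed with a \qed---so there is no in-paper proof to compare against; your direct verification of the draconian inequalities, splitting on whether the new index $N+1$ appears in the index set and absorbing the extra $+1$ via the genuinely new neighbor $\overline{N+1}$, is exactly the natural argument and is sound.
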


\begin{lem}[{\cite[Lemma 3.15]{DavisChen}}]\label{lem: second rec lem tri}
Let $G$ be a connected graph on $[N]$ and let $e = uv$ be any edge.
	If $c \in \fkD(G)$, then $\betatri(c) \in \fkD(G\triangle e)$ where
	\[
		\betatri(c) = \alphatri(c) + \e_u - \e_{N+1}.
	\]
	Additionally, $\betatri$ is injective. \qed
\end{lem}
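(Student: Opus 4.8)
The plan is to verify directly that $\betatri(c)$ satisfies the three defining properties of a $D(G\triangle e)$-draconian sequence, and then to dispatch injectivity. Writing $\alphatri(c) = (c,1)$, the vector $\betatri(c) = \alphatri(c) + \e_u - \e_{N+1}$ has $u$-th entry $c_u + 1$, final entry $0$, and all other entries equal to those of $c$. Nonnegativity of the entries is then immediate, and since $\e_u - \e_{N+1}$ has coordinate sum $0$, the entries of $\betatri(c)$ sum to $\sum_i c_i + 1 = N$, as required for a draconian sequence on the $(N+1)$-vertex graph $G\triangle e$. The substance of the lemma is therefore to check the draconian inequality \eqref{eq: draconian inequality} for every index set $S = \{i_1,\dots,i_k\} \subseteq [N+1]$.

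The key structural input is how the neighborhoods change in passing from $D(G)$ to $D(G\triangle e)$: only $u$, $v$, and the new vertex $N+1$ are affected, with $\calN_{D(G\triangle e)}(u)$ and $\calN_{D(G\triangle e)}(v)$ each acquiring the single new element $\overline{N+1}$, while $\calN_{D(G\triangle e)}(N+1) = \{\overline{N+1}, \overline u, \overline v\}$ and all remaining neighborhoods are unchanged. I would split according to whether $N+1 \in S$. When $N+1 \notin S$, the union $\bigcup_{i\in S}\calN_{D(G\triangle e)}(i)$ equals the corresponding $D(G)$-union together with $\overline{N+1}$ exactly when $u$ or $v$ lies in $S$. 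The extra $+1$ contributed to the left-hand side by the incremented $u$-entry (present precisely when $u \in S$) is then matched by the new element $\overline{N+1}$ on the right, so adding $1$ to both sides of the strict $D(G)$-draconian inequality for $c$ gives the result; and when $u \notin S$ the left side is unchanged while the right side can only grow.

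The delicate case, and the main obstacle, is $N+1 \in S$. Writing $S = S' \cup \{N+1\}$ with $S' \subseteq [N]$, the contribution of $N+1$ to the sum is $0$, so the left-hand side equals $\sum_{i\in S'} c_i$, increased by $1$ exactly when $u \in S'$, while the right-hand side picks up $\overline{N+1}$, $\overline u$, and $\overline v$ from $\calN_{D(G\triangle e)}(N+1)$. One must carefully account for this count: $\overline{N+1}$ is genuinely new relative to every $D(G)$-neighborhood, but whether $\overline u$ and $\overline v$ were already present in $\bigcup_{i\in S'}\calN_{D(G)}(i)$ must be tracked. The argument hinges on the edge relation $e = uv$. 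If $u \in S'$, then since $uv \in E(G)$ both $\overline u$ and $\overline v$ already lie in $\calN_{D(G)}(u)$, so the only new element is $\overline{N+1}$; thus the right side exceeds the $D(G)$-count by exactly $1$, matching the $+1$ created by incrementing the $u$-entry, and the strict $D(G)$-inequality for $c$ survives. If $u \notin S'$, the left side is just $\sum_{i\in S'} c_i$, strictly less than the $D(G)$-count, while the right side gains at least $\overline{N+1}$; the empty subcase $S' = \emptyset$ reads $0 < 3$. In every subcase the inequality is strict, completing the verification that $\betatri(c) \in \fkD(G\triangle e)$.

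Finally, injectivity is essentially free: $\betatri$ is the composite of the injective map $\alphatri$ of Lemma~\ref{lem: first rec lem tri} with the translation $x \mapsto x + \e_u - \e_{N+1}$, which is a bijection of $\RR^{N+1}$, so $\betatri$ is injective. Equivalently, one recovers $c$ from $\betatri(c)$ by deleting the final (zero) coordinate and decrementing the $u$-th coordinate.
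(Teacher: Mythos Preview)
Your argument is correct. Note, however, that the paper does not supply its own proof of this lemma: it is quoted verbatim from \cite[Lemma~3.15]{DavisChen} and marked with \qed, so there is nothing in the present paper to compare against. That said, your direct verification---splitting on whether $N+1$ lies in the index set $S$, and within the case $N+1\in S$ further splitting on whether $u\in S'$---is exactly the kind of case analysis one expects, and it tracks the neighborhood changes in $D(G\triangle e)$ accurately. The crucial observation you use, that $\overline u,\overline v\in\calN_{D(G)}(u)$ because $uv\in E(G)$ (so that adjoining vertex $N+1$ adds only $\overline{N+1}$ to the union when $u\in S'$), is precisely what makes the $+1$ on the left match the $+1$ on the right in the delicate subcase. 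Your injectivity argument via composition with a translation is clean and sufficient.
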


We have occasional need for the notation $\scrBtri_{G}(e)$ to denote the image of $\gammatri(e)$.
The benefit of these lemmas is their ability to apply to any connected graph. 
A third lemma is needed to establish the triangle recurrence, but to prove it requires additional restrictions on $G$.
The lemma we need will be an adaptation of the following.

\begin{lem}[{\cite[Lemma 3.16]{DavisChen}}]\label{lem: third rec lem tri}
Let $G$ be a connected graph on $[N]$ and let $e = uv$ be any edge for which $\deg_G(u) = 2$.
	If $c \in \fkD(G)$, then $\gammatri(c) \in \fkD(G\triangle e)$ where
	\[
		\gammatri(c) =
			\begin{cases}
				\alphatri(c) + \e_v - \e_{N+1} & \text{ if not in } \scrBtri_G(e) \\
				\alphatri(c) - \e_u + \e_{N+1} & \text{ otherwise}.
			\end{cases}
	\]
	Additionally, $\gammatri$ is injective. \qed
\end{lem}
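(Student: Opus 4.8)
The statement has two parts: that $\gammatri(c)$ lies in $\fkD(G\triangle e)$, and that $\gammatri$ is injective. I would dispose of injectivity first. Writing $w_e$ for the new vertex (indexed $N+1$), the two branches of the definition produce vectors whose $w_e$-coordinate is $1-1=0$ and $1+1=2$ respectively, so a coincidence $\gammatri(c)=\gammatri(c')$ can only occur within a single branch; there it reduces to $c+\e_v=c'+\e_v$ or $c-\e_u=c'-\e_u$, each forcing $c=c'$. Hence $\gammatri$ is injective, and the real work is verifying membership.

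For membership, the sum condition is immediate: $\alphatri(c)$ has coordinate sum $N$, and each correction $\e_v-\e_{N+1}$ or $-\e_u+\e_{N+1}$ has zero coordinate sum, so $\gammatri(c)$ sums to $N=(N+1)-1$. It remains to check the $D(G\triangle e)$-draconian inequalities. The key structural observation is that passing from $G$ to $G\triangle e$ alters the bipartite neighborhoods in a tightly controlled way: only $u$ and $v$ acquire the new neighbor $\overline{w_e}$, the new vertex satisfies $\calN_{D(G\triangle e)}(w_e)=\{\overline u,\overline v,\overline{w_e}\}$, and $\deg_G(u)=2$ means $\calN_{D(G)}(u)=\{\overline u,\overline v,\overline x\}$ records exactly $u$, $v$, and the second neighbor $x$ of $u$. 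I would fix an index set $I\subseteq[N+1]$, write $J=I\cap[N]$, and compare the $G\triangle e$-draconian inequality for $I$ against the $G$-draconian inequalities satisfied by $c$. In the first branch $\gammatri(c)=(c+\e_v,0)$ the $w_e$-coordinate is $0$, so including $w_e$ in $I$ never raises the left-hand side while never lowering the right-hand side, and it suffices to treat $I\subseteq[N]$; there the extra $+1$ on $v$ is paid for because whenever $v\in I$ the element $\overline{w_e}$ enters the union, enlarging the right-hand side by one, and the case follows directly from the $G$-draconian inequality for $J$.

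The substance of the proof is the second branch $\gammatri(c)=(c-\e_u,2)$. Here the case hypothesis that $\alphatri(c)+\e_v-\e_{N+1}$ lies in $\scrBtri_G(e)$ guarantees $c+\e_v-\e_u\in\fkD(G)$, and in particular $c_u\ge 1$, which is the fact I would actually use. When $w_e\notin I$, lowering the $u$-coordinate only helps, and the inequality is inherited from $c$. The genuine obstacle is the case $w_e\in I$, where the $w_e$-coordinate contributes $2$ to the left-hand side and this surplus must be absorbed. Since $w_e\in I$ forces $\overline u,\overline v,\overline{w_e}$ into the neighborhood union, I would split according to whether $u\in J$: when $u\in J$ the closed neighborhood of $u$ drags $u,v,x$ into the covered set $U=\bigcup_{i\in J}(\{i\}\cup\calN_G(i))$ and the inequality collapses to the $G$-draconian inequality for $J$. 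When $u\notin J$, I would further ask whether $x$ already lies in $U$: if so, the $G$-draconian inequality for $J\cup\{u\}$ together with $c_u\ge1$ and $\calN_{D(G)}(u)=\{\overline u,\overline v,\overline x\}$ furnishes exactly the missing unit of slack; if not, then $u$ is itself uncovered, the right-hand side gains an extra unit, and the inequality for $J$ alone suffices. I expect this final trichotomy to be the crux: the delicate point is recognizing that $\deg_G(u)=2$ is precisely what keeps $\calN_{D(G)}(u)$ small enough to fit inside the enlarged neighborhood when $x$ is covered, so that the $+2$ contributed by the triangle vertex $w_e$ can always be discharged.
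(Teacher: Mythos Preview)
The paper does not prove this lemma: it is quoted verbatim from \cite{DavisChen} and closed with a \qed, so there is no in-paper argument to compare against. Your outline is largely sound and matches the strategy that the paper later reuses in its proof of Lemma~\ref{lem: third rec lem tri new}: treat the two branches separately, and in the second branch do a case split on whether $w_e\in I$ and on the position of $u$ relative to $J=I\cap[N]$.

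There is, however, a genuine gap in your trichotomy for the second branch. In the sub-case $u\notin J$ and $x\notin U$ you assert that ``$u$ is itself uncovered.'' This is false in general: since $\calN_G(u)=\{v,x\}$, we have $u\in U$ exactly when $v\in J$ or $x\in J$, and nothing in ``$x\notin U$'' rules out $v\in J$. Concretely, take $u\notin J$, $v\in J$, and $x$ neither in $J$ nor adjacent to any member of $J$; then $u\in U$ via $v$, and your bound ``right-hand side $\ge |\bigcup_{j\in J}\calN_{D(G)}(j)|+2$'' fails because $\overline u$ and $\overline v$ are already in the union. The $G$-draconian inequality for $J$ alone then leaves you exactly one unit short, and $c_u\ge 1$ is not enough to recover it.

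The fix is already implicit in what you wrote: you noted that the branch hypothesis gives $c'=c+\e_v-\e_u\in\fkD(G)$, and then discarded everything except $c_u\ge 1$. Don't discard it. When $u\notin J$ and $v\in J$, apply the $D(G)$-draconian inequality for $J$ to $c'$ instead of $c$: this yields $\sum_{j\in J}c_j+1<|\bigcup_{j\in J}\calN_{D(G)}(j)|$, which is precisely the extra unit of slack you need once $\overline{w_e}$ is added on the right. With this correction your case analysis becomes: (i) $u\in J$; (ii) $u\notin J$, $v\in J$; (iii) $u,v\notin J$ and $x\in U$; (iv) $u,v\notin J$ and $x\notin U$ --- and in case (iv) your original argument that $u\notin U$ is now valid, since $\{v,x\}\cap J=\emptyset$.
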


Much of the work we will need to do involves showing that the conclusion to Lemma~\ref{lem: third rec lem tri} will still hold in our more general setting. 
The rest of the work will be in showing that the images of $\alphatri(G), \betatri(G)$, and $\gammatri(G)$ are disjoint, and that each appropriate $D(G)$-draconian sequence is in the image of one of these functions.

For our adaptation of Lemma~\ref{lem: third rec lem tri}, we slightly extend the definition of $G \triangle e$ as follows:
If $F$ is a subset of edges of $G$, then set
\[
	G \triangle F = \bigcup_{e \in F} G \triangle e.
\]
In other words, $G \triangle F$ is the graph obtained by taking each edge $e = uv$ in $F$, creating a new vertex $w_e$, and adding the edges $uw_e$ and $vw_e$.


\begin{lem}\label{lem: third rec lem tri new}
	Let $\calM$ be an $M$-element matching of $K_N$ and $e = uv$ an edge of $K_N$ such that $\calM ' = \calM \cup \{e\}$ is also a matching of $K_N$.
	If $c \in \fkD(K_N \triangle \calM)$, then $\gammatri(c) \in \fkD(K_N \triangle \calM')$ where
	\[
		\gammatri(c) = 
			\begin{cases} 
				\alphatri(c) + \e_v - \e_{w_e} & \text{if not in } \scrBtri_{K_N \triangle \calM}(e) \\ 
				\alphatri(c) - \e_u + \e_{w_e} & \text{otherwise}.
			\end{cases}
	\]
Additionally, $\gammatri$ is injective.
\end{lem}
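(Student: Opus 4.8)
The plan is to work directly with the combinatorial description of draconian sequences, avoiding the polytopes entirely. Write $H = K_N \triangle \calM$ and $H' = H \triangle e = K_N \triangle \calM'$, and let $w_e$ be the new vertex, so $\calN_{H'}(w_e) = \{u,v\}$. Because $\calM' = \calM \cup \{e\}$ is a matching, neither $u$ nor $v$ is an endpoint of an edge of $\calM$; hence in $H$ each of them is adjacent to every other vertex of $[N]$ and to no triangle-vertex, that is, $\calN_H(u) = [N]\setminus\{u\}$ and $\calN_H(v) = [N]\setminus\{v\}$. Throughout I would use the reformulation that the defining inequalities of a $D(H)$-draconian sequence read $\sum_{x\in S} c_x < |\calN_H[S]|$ for every $S \subseteq V(H)$, where $\calN_H[S] := S \cup \bigcup_{x\in S}\calN_H(x)$ is the closed neighborhood; this is just \eqref{eq: draconian inequality} unwound for the bipartite double $D(H)$.

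First I would record how closed neighborhoods change in passing from $H$ to $H'$. For $S\subseteq V(H')$ with $w_e\notin S$ one has $\calN_{H'}[S] = \calN_H[S]$, enlarged by $\{w_e\}$ exactly when $S\cap\{u,v\}\neq\emptyset$; for $w_e\in S$, writing $S' = S\setminus\{w_e\}$, one has $|\calN_{H'}[S]| = |\calN_H[S']\cup\{u,v\}| + 1$. The other preliminary observation is that the ``otherwise'' branch, namely $\alphatri(c)+\e_v-\e_{w_e}\in\scrBtri_{K_N\triangle\calM}(e)$, holds precisely when $\tilde c := c - \e_u + \e_v$ lies in $\fkD(H)$: one matches coordinates against $\betatri(c') = \alphatri(c')+\e_u-\e_{w_e}$ and invokes injectivity of $\betatri$. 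In particular the ``otherwise'' branch forces $c_u\geq 1$, so that $\gammatri(c)$ is a genuine nonnegative sequence, and both branches manifestly preserve the coordinate sum $N+M$ required of members of $\fkD(H')$.

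Next I would verify the draconian inequalities for $d := \gammatri(c)$ by casework on a test set $S\subseteq V(H')$, split according to whether $w_e\in S$ and which branch defines $\gammatri$. In each case the aim is to reduce $\sum_{x\in S} d_x < |\calN_{H'}[S]|$ to a draconian inequality for $c$ or $\tilde c$ on a subset of $V(H)$, using the bookkeeping above. Most cases are immediate from $c\in\fkD(H)$, often with room to spare; when $v\in S' := S\setminus\{w_e\}$, the presence of $v$ (and hence of $u$) in $\calN_H[S']$ lets me absorb the extra $+1$ contributed by $\e_v$ (first branch) or by the $w_e$-coordinate (second branch) against the matching growth of the neighborhood, again reducing to an inequality for $c$ or $\tilde c$.

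The one genuinely delicate case, and the step I expect to be the main obstacle, is the second branch with $w_e\in S$ but $u,v\notin S'$: here $\sum_{x\in S} d_x = \sum_{x\in S'} \tilde c_x + 2$, and I must beat $|\calN_{H'}[S]|$. If $S'\cap[N] = \emptyset$ then $u,v\notin\calN_H[S']$ and there is ample room. If $S'\cap[N]\neq\emptyset$ then, crucially, the full-neighborhood property of $u,v$ forces $[N]\subseteq \calN_H[S']$, whence $\calN_H[S'\cup\{v\}] = \calN_H[S']$; applying the draconian inequality for $\tilde c$ to $S'\cup\{v\}$ and using $\tilde c_v = c_v+1\geq 1$ gives $\sum_{x\in S'}\tilde c_x \leq |\calN_H[S']| - 2$, exactly the extra unit of slack required. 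This is the only place where both the hypothesis $\tilde c\in\fkD(H)$ (equivalently, the $\scrBtri$ condition) and the matching hypothesis on $u,v$ are essential. Finally, injectivity of $\gammatri$ is routine: the two branches produce sequences whose $w_e$-coordinate equals $0$ and $2$ respectively, so their images are disjoint, and within each branch injectivity follows from that of $\alphatri$.
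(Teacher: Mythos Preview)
Your proof is correct and follows essentially the same strategy as the paper's: a case-by-case verification of the $D(K_N\triangle\calM')$-draconian inequalities for $\gammatri(c)$, organized by how the test set meets $\{u,v,w_e\}$, with injectivity handled by inspecting the $w_e$-coordinate. The only notable technical difference is in the delicate subcase of the second branch (your $w_e\in S$, $u,v\notin S'$, $S'\cap[N]\neq\emptyset$): the paper adjoins $u$ to the index set and combines $c_u\geq 1$ with the draconian inequality for $c$, whereas you first identify the ``otherwise'' condition with $\tilde c = c-\e_u+\e_v\in\fkD(H)$, then adjoin $v$ and invoke the draconian inequality for $\tilde c$; both variants rest on the same observation that $u$ and $v$, being unmatched by $\calM$, have closed neighborhood $[N]$ in $H$.
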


\begin{proof}
	Let $\calM$ be an $M$-element, non-maximal matching of $K_N$.
	By Remark~\ref{rmk: relabeling invariant}, we may assume, without loss of generality, that $V(K_N) = [N]$, $V(K_N \triangle \calM) = [N+M]$, $e = \{N+M-1,N+M\}$, and the new vertex added to $K_N \triangle \calM$ is $N+M+1$.
	
	That $\gammatri$ is injective is clear from the definition.
	We wish to show, then, that if $c = (c_1,\dots, c_{N+M}) \in \fkD(G \triangle \calM)$, then $\gammatri(c) \in \fkD(G \triangle \calM')$, where
	\[
		\gammatri(c) = 
			\begin{cases} 
				(c_1,\dots, c_{N+M-1}, c_{N+M}+1, 0) & \text{if not in } \scrBtri_{K_N \triangle \calM}(e) \\ 
				(c_1,\dots, c_{N+M-1}-1, c_{N+M}, 2) & \text{otherwise}.
			\end{cases}
	\]
	Note that in each case, the sum of all entries is $N+M$, as needed.
	
	First suppose that $\gammatri(c) = (c_1,\dots, c_{N+M-1}, c_{N+M}+1, 0)$.
	For notational convenience, let the entries of $\gammatri(c)$ be denoted by $(\gammatri_1,\dots, \gammatri_{N+M+1})$.
	We will verify that the required $D(K_N \triangle \calM')$-draconian inequalities hold.

	Let $1 \leq i_1 < \cdots < i_k \leq N+M+1$.
	If $i_k < N+M-1$, then $\gammatri_{i_j} = c_{i_j}$ for each $j$, and, since $c \in \fkD(K_N \triangle \calM)$, we have
	\[
		\gammatri_{i_1} +\dots+ \gammatri_{i_k} = c_{i_1} +\dots+ c_{i_k} < \left|\bigcup_{j=1}^k \calN_{D(K_N \triangle \calM)}(i_j)\right| = \left|\bigcup_{j=1}^k \calN_{D(K_N \triangle \calM')}(i_j)\right|.
	\]
	If $i_k \in \{N+M-1,N+M\}$, then 
	\[
		\begin{aligned}
			\gammatri_{i_1} +\dots+ \gammatri_{i_k} 
				&\leq c_{i_1} +\dots+ c_{i_k} + 1 \\
				&< \left|\bigcup_{j=1}^k \calN_{D(K_N \triangle \calM)}(i_j)\right| + |\{\overline{N+M+1}\}| \\
				&= \left|\bigcup_{j=1}^k \calN_{D(K_N \triangle \calM')}(i_j)\right|.
		\end{aligned}
	\]
	Lastly, if $i_k = N+M+1$, then, by the previous cases,
	\[
		\begin{aligned}
			\gammatri_{i_1} +\dots+ \gammatri_{i_k} 
				&= c_{i_1} +\dots+ c_{i_{k-1}} \\
				&< \left|\bigcup_{j=1}^{k-1} \calN_{D(K_N \triangle \calM')}(i_j)\right| \\
				&\leq \left|\bigcup_{j=1}^k \calN_{D(K_N \triangle \calM')}(i_j)\right|.
		\end{aligned}
	\]
	Thus, $\gammatri(c) \in \fkD(K_N \triangle \calM')$ if it is not already a member of $\scrBtri_{K_N \triangle \calM}(e)$.
	
	Now consider the case in which $\gammatri(c) = (c_1,\dots, c_{N+M-1}-1, c_{N+M}, 2)$. 
	Here, we know $(c_1,\dots, c_{N+M-1}, c_{N+M}+1, 0) \in \scrBtri_{K_N \triangle \calM}(e)$, so that $c_{N+M-1} \geq 1$.
	This time, denote the entries of $\gammatri(c)$ by $(\gammatrip_1,\dots, \gammatrip_{N+M+1})$ and let $1 \leq i_1 < \dots < i_k \leq N+M+1$.
	If $i_k < N+M+1$, then
	\[
		\gammatrip_{i_1} +\dots+ \gammatrip_{i_k} \leq c_{i_1} +\dots+ c_{i_k} < \left|\bigcup_{j=1}^k \calN_{D(K_N \triangle \calM)}(i_j)\right| \leq \left|\bigcup_{j=1}^k \calN_{D(K_N \triangle \calM')}(i_j)\right|.
	\]
	
	If $i_k = N+M+1$, then there are two cases to consider.
	If $i_j > N$ for each $j < k$, then no two $i_j$ and $i_{j'}$ share a neighbor in $D(K_N \triangle \calM')$.
	Therefore,
	\[
		\gammatrip_{i_1} +\dots+ \gammatrip_{i_k} \leq 2 + \cdots + 2 < \left|\bigcup_{j=1}^k \calN_{D(K_N \triangle \calM')}(i_j)\right|.
	\]
	Otherwise, suppose $i_j \leq N$ for some $j < k$.
	If $i_{\ell} = N+M-1$, for some $\ell$, then
	\[
		\begin{aligned}
			\gammatrip_{i_1} +\dots+ \gammatrip_{i_k} 
				&= c_{i_1} +\dots+ c_{i_{k-1}} - 1 + 2 \\
				&< \left|\bigcup_{j=1}^{k-1} \calN_{D(K_N \triangle \calM)}(i_j)\right| + |\{\overline{N+M+1}\}| \\
				&\leq \left|\bigcup_{j=1}^k \calN_{D(K_N \triangle \calM')}(i_j)\right|.
		\end{aligned}
	\]
	If $i_j \neq N+M-1$ for all $j$, then we use the fact that 
	\[
		\calN_{K_N \triangle \calM}(N+M-1) \subseteq \bigcup_{j=1}^{k-1} \calN_{D(K_N \triangle \calM)}(i_j)
	\]
	to obtain
	\[
		\begin{aligned}
			\gammatrip_{i_1} +\dots+ \gammatrip_{i_k} &\leq c_{i_1} +\dots+ c_{i_{k-1}} + c_{N+M-1} - 1 + 2 \\
				&< \left|\left(\bigcup_{j=1}^{k-1} \calN_{D(K_N \triangle \calM)}(i_j)\right) \cup \calN_{K_N \triangle \calM}(N+M-1) \right| + |\{\overline{N+M+1}\}| \\
				&\leq \left|\bigcup_{j=1}^k \calN_{D(K_N \triangle \calM')}(i_j)\right|.
		\end{aligned}
	\]
	Since all draconian inequalities hold, we have $\gammatri(c) \in \fkD(K_N \triangle \calM')$.
\end{proof}

The following lemma is straightforward from the definitions of $\alphatri$, $\betatri$, and $\gammatri$, so we omit its proof.
In it, we use $\scrAtri_{K_N \triangle \calM}(e)$ to denote the image of $\alphatri$ and $\scrCtri_{K_N \triangle \calM}(e)$ to denote the image of $\gammatri$.

\begin{lem}\label{lem: pairwise disjoint}
	Let $\calM$ be matching of $K_N$ and $e = uv$ an edge of $K_N$ such that $\calM ' = \calM \cup \{e\}$ is also a matching of $K_N$.
	The sets $\scrAtri_{K_N \triangle \calM}(e), \scrBtri_{K_N \triangle \calM}(e)$, and $\scrCtri_{K_N \triangle \calM}(e)$ are pairwise disjoint. \qed
\end{lem}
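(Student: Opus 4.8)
The plan is to distinguish the three image sets by the single coordinate indexed by the newly added vertex $w_e$ (equivalently, the $(N+M+1)$-st coordinate), since this is precisely where $\alphatri$, $\betatri$, and the two branches of $\gammatri$ behave differently.

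First I would record the value of this coordinate for a typical element of each image. Since $\alphatri(c) = (c,1)$, every element of $\scrAtri_{K_N \triangle \calM}(e)$ has $w_e$-coordinate equal to $1$. Because $\betatri(c) = \alphatri(c) + \e_u - \e_{w_e}$ alters only the $u$-th and $w_e$-th entries, every element of $\scrBtri_{K_N \triangle \calM}(e)$ has $w_e$-coordinate equal to $1 - 1 = 0$. Finally, reading off the two branches in Lemma~\ref{lem: third rec lem tri new}, every element of $\scrCtri_{K_N \triangle \calM}(e)$ has $w_e$-coordinate equal to $0$ (first branch) or $2$ (second branch).

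From these observations, the disjointness of $\scrAtri_{K_N \triangle \calM}(e)$ from the other two sets is immediate: its $w_e$-coordinate is always $1$, a value attained by no element of $\scrBtri_{K_N \triangle \calM}(e)$ or $\scrCtri_{K_N \triangle \calM}(e)$. For $\scrBtri_{K_N \triangle \calM}(e) \cap \scrCtri_{K_N \triangle \calM}(e)$, I would split according to the branch of $\gammatri$. An element arising from the second branch has $w_e$-coordinate $2 \neq 0$, so it cannot lie in $\scrBtri_{K_N \triangle \calM}(e)$. An element arising from the first branch equals $\alphatri(c) + \e_v - \e_{w_e}$ exactly in the case that this vector does \emph{not} belong to $\scrBtri_{K_N \triangle \calM}(e)$, which is the defining condition of the first branch; hence such elements are, by construction, excluded from $\scrBtri_{K_N \triangle \calM}(e)$. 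Therefore the two sets are disjoint, completing the argument.

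I expect no genuine obstacle here; the only point requiring care is the last one, where the disjointness of $\scrBtri_{K_N \triangle \calM}(e)$ from the first branch of $\gammatri$ is not a numerical coincidence among coordinates but is built directly into the case distinction defining $\gammatri$. Recognizing this lets me sidestep any computation with the draconian inequalities and keep the whole verification purely definitional, consistent with the claim that the lemma is straightforward from the definitions.
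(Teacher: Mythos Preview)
Your proposal is correct and is precisely the sort of direct verification the paper has in mind: the paper omits the proof entirely, stating that the lemma ``is straightforward from the definitions of $\alphatri$, $\betatri$, and $\gammatri$.'' Your argument, which tracks the $w_e$-coordinate to separate $\scrAtri$ from the other two sets and then invokes the built-in case distinction of $\gammatri$ to separate $\scrBtri$ from $\scrCtri$, is exactly that straightforward check.
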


\begin{thm}\label{thm: triangles on matching}
	Let $N \geq 2$.
	If $\calM$ is an $M$-element matching of $K_N$, then
	\[
		\NVol(\APQ_{K_N \triangle \calM}) = 3^M\binom{2(N-1)}{N-1}.
	\]
\end{thm}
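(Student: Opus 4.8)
The plan is to induct on $M$, reducing the general matching to $M$ applications of a single ``$\times 3$'' step, with $\NVol$ and $|\fkD|$ linked throughout by Theorem~\ref{thm: translation} (each $K_N \triangle \calM$ is connected). The base case $M = 0$ is $\APQ_{K_N}$. Since $K_N$ is complete, every pair $ij$ is an edge, so $D(K_N)$ is the complete bipartite graph $K_{N,\overline N}$ and $\calN_{D(K_N)}(i) = [\overline N]$ for every $i$. Hence for any nonempty index set the right-hand side of the draconian inequality~\eqref{eq: draconian inequality} equals $N$, while the left-hand side is at most $\sum_i c_i = N - 1 < N$. Thus every $c \in \ZZ_{\geq 0}^N$ with $\sum c_i = N-1$ is $D(K_N)$-draconian, and a stars-and-bars count gives $|\fkD(K_N)| = \binom{2(N-1)}{N-1}$, matching the claim at $M = 0$.

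For the inductive step, enumerate the edges of $\calM$ as $e_1,\dots,e_M$ and set $\calM_m = \{e_1,\dots,e_m\}$, so that $\calM_0 = \emptyset$ and each $\calM_m = \calM_{m-1} \cup \{e_m\}$ is again a matching of $K_N$. It suffices to establish the one-step recurrence $|\fkD(K_N \triangle \calM_m)| = 3\,|\fkD(K_N \triangle \calM_{m-1})|$, since composing $M$ such steps onto the base case yields the theorem. By Remark~\ref{rmk: relabeling invariant} we relabel as in Lemma~\ref{lem: third rec lem tri new}. The maps $\alphatri$, $\betatri$, $\gammatri$ of Lemmas~\ref{lem: first rec lem tri}, \ref{lem: second rec lem tri}, and \ref{lem: third rec lem tri new} are injective and carry $\fkD(K_N \triangle \calM_{m-1})$ into $\fkD(K_N \triangle \calM_m)$, and by Lemma~\ref{lem: pairwise disjoint} their images $\scrAtri$, $\scrBtri$, $\scrCtri$ are pairwise disjoint. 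This already gives the lower bound $|\fkD(K_N \triangle \calM_m)| \geq 3\,|\fkD(K_N \triangle \calM_{m-1})|$.

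The reverse inequality — equivalently, that $\scrAtri \cup \scrBtri \cup \scrCtri$ exhausts $\fkD(K_N \triangle \calM_m)$ — is where the real work lies, and it is the step I expect to be the main obstacle. The guiding observation is that the new vertex $w_{e_m}$ has exactly the two neighbors $u,v$ in $K_N \triangle \calM_m$, so in $D(K_N \triangle \calM_m)$ it has precisely the three neighbors $\overline{w_{e_m}}, \overline u, \overline v$; the singleton draconian inequality at $w_{e_m}$ then forces the corresponding coordinate $d_{w_{e_m}}$ of any $d \in \fkD(K_N \triangle \calM_m)$ into $\{0,1,2\}$. I would split on this value: when $d_{w_{e_m}} = 1$ the sequence is manifestly $\alphatri$ of its truncation; when $d_{w_{e_m}} = 2$ it should be the second branch of $\gammatri$, recovered by deleting the final $2$ and incrementing the $u$-coordinate; and when $d_{w_{e_m}} = 0$ it must arise from either $\betatri$ (incrementing $u$) or the first branch of $\gammatri$ (incrementing $v$), the ambiguity being resolved exactly by the $\scrBtri$-membership test built into the definition of $\gammatri$.

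The crux in each case is to check that the reconstructed shorter sequence $c$ genuinely lies in $\fkD(K_N \triangle \calM_{m-1})$, that is, that undoing the append or decrement creates no violated $D(K_N \triangle \calM_{m-1})$-draconian inequality. This is the mirror image of the estimates in the proof of Lemma~\ref{lem: third rec lem tri new}: for each index set $i_1 < \cdots < i_k$ one compares the change in the left-hand side against the change in the right-hand side caused by deleting $w_{e_m}$ together with its bipartite partner $\overline{w_{e_m}}$, the only element by which the two neighborhood-unions can differ. The delicate instances are, as before, the index sets meeting $\{u,v\}$ and those containing (the index of) $w_{e_m}$, where a decremented coordinate interacts with the lost neighbor $\overline{w_{e_m}}$; here one leans on the fact that $u$ and $v$ retain their full $K_N$-degree, so the relevant neighborhood-unions remain large and the inequalities keep their slack. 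Once exhaustion is proved, disjointness promotes the lower bound to the claimed equality, and the induction closes.
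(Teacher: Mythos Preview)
Your plan is correct and matches the paper's proof essentially step for step: induction on $M$ with the $K_N$ base case, the forward inclusion via Lemmas~\ref{lem: first rec lem tri}, \ref{lem: second rec lem tri}, \ref{lem: third rec lem tri new}, \ref{lem: pairwise disjoint}, and the reverse inclusion by a three-way case split on $d_{w_{e_m}} \in \{0,1,2\}$, verifying in each case that the reconstructed truncation lies in $\fkD(K_N \triangle \calM_{m-1})$. The only point to be careful with when you write out the details is the $d_{w_{e_m}} = 2$ case, where you must check not only that the preimage $c$ (obtained by incrementing the $u$-coordinate) is draconian but also that the first-branch output $\alphatri(c) + \e_v - \e_{w_e}$ lands in $\scrBtri$, so that $\gammatri$ actually selects its second branch on $c$.
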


\begin{proof}
	We will proceed by induction.
	The case of $\calM = \emptyset$ is exactly \cite[Proposition 2.10]{DavisChen}.
	We assume that the conclusion holds for any non-maximal, $M$-element matching of $K_N$.
	
	Let $\calM$ be an arbitrary non-maximal, $M$-element matching of $K_N$.
	Since $\calM$ is non-maximal, there is an edge $e$ of $K_N$ such that $\calM' = \calM \uplus \{e\}$ is a matching of $K_N$ as well.
	By Remark~\ref{rmk: relabeling invariant}, we may freely relabel the vertices of $K_N \triangle \calM$ such that $e = \{N+M-1, N+M\}$ and such that the unique vertex of $K_N \triangle \calM'$ not appearing in $K_N \triangle \calM$ is $N+M+1$. 
	Applying Lemmas~\ref{lem: first rec lem tri}, \ref{lem: second rec lem tri}, \ref{lem: third rec lem tri new}, and \ref{lem: pairwise disjoint}, we have 
	\[
		\scrAtri_{K_N \triangle \calM}(e) \uplus \scrBtri_{K_N \triangle \calM}(e) \uplus \scrCtri_{K_N \triangle \calM}(e) \subseteq \fkD(K_N \triangle \calM').
	\]
	Hence, we only need to establish the reverse inclusion. 
	
	Let $d = (d_1,\dots, d_{N+M+1}) \in \fkD(K_N \triangle \calM')$.
	We will show that $d$ is in one of $\scrAtri_{K_N \triangle \calM}(e)$, $\scrBtri_{K_N \triangle \calM}(e)$, $\scrCtri_{K_N \triangle \calM}(e)$ by proving each of the following claims:
	\begin{enumerate}
		\item if $d_{N+M+1} = 1$, then $(d_1,\dots, d_{N+M}) \in \fkD(K_N \triangle \calM)$;
		\item if $d_{N+M+1} = 0$, then one of $(d_1,\dots, d_{N+M-1}-1, d_{N+M})$ and $(d_1,\dots, d_{N+M-1},  d_{N+M}-1)$ is in $\fkD(K_N \triangle \calM)$;
		\item if $d_{N+M+1} = 2$, then $(d_1,\dots, d_{N+M-1}+1, d_{N+M}+1, 0) \in \scrBtri_{K_N \triangle \calM}(e)$ and $(d_1,\dots, d_{N+M-1}+1, d_{N+M}) \in \fkD(K_N \triangle \calM)$.
	\end{enumerate}
	
	If $d_{N+M+1} = 1$, then we claim $(d_1,\dots, d_{N+M}) \in \fkD(G)$.
	Let $1 \leq i_1 < \dots < i_k \leq N+M$.
	If $i_k < N+M-1$, then
	\[
		d_{i_1} + \cdots + d_{i_k} < \left|\bigcup_{j=1}^k\calN_{D(K_N \triangle \calM')}(i_j)\right| = \left|\bigcup_{j=1}^k\calN_{D(K_N \triangle \calM)}(i_j)\right|.
	\]
	If $i_k \geq N+M-1$, then 
	\[
		\begin{aligned}
			d_{i_1} + \cdots + d_{i_k} 
				&= d_{i_1} + \cdots + d_{i_k} + 1 - 1 \\
				&< \left|\bigcup_{j=1}^k\calN_{D(K_N \triangle \calM')}(i_j)\right| - 1 \\
				&= \left|\bigcup_{j=1}^k\calN_{D(K_N \triangle \calM)}(i_j)\right|.
		\end{aligned}
	\]
	
	Next suppose that $d_{N+M+1} = 0$. 
	If we also have $d_{N+M} = d_{N+M-1} = 0$, then we would have
	\[
		N+M = d_1 + \cdots + d_{N+M+1} = d_1 + \cdots + d_{N+M-2} < \left|\bigcup_{j=1}^{N+M-2}\calN_{D(K_N \triangle \calM')}(i_j)\right| = N+M,
	\]
	which is a contradiction.
	Thus, at least one of $d_{N+M}, d_{N+M-1}$ is positive.
	Without loss of generality we assume $d_{N+M} > 0$ and will show that $(d_1,\dots, d_{N+M-1},  d_{N+M}-1)$ is in $\fkD(K_N \triangle \calM)$.
	
	For notational convenience, let
	\[
		d' = (d'_1,\dots,d'_{N+M}) = (d_1,\dots, d_{N+M-1},  d_{N+M}-1).
	\]
	Consider a sum of the form $d'_{i_1} + \cdots + d'_{i_k}$.
	If $i_k < N+M-1$, then the neighbors of $i_j$ are the same in $D(K_N \triangle \calM')$ and $D(K_N \triangle \calM)$, so the corresponding draconian inequality immediately holds. 
	If $i_k = N+M-1$, then
	\[
		\begin{aligned}
			d'_{i_1} + \cdots + d'_{i_k} - 1 &= d_{i_1} + \cdots + d_{i_{k-1}} + d_{N+M} - 1 \\
				&< \left|\bigcup_{j=1}^k \calN_{D(K_N \triangle \calM')}(i_j)\right| - |\{\overline{N+M+1}\}| \\
				&= \left|\bigcup_{j=1}^k \calN_{D(K_N \triangle \calM)}(i_j)\right|.
		\end{aligned}
	\]
	A nearly-identical argument holds if $i_k = N+M$ and $i_{k-1} = N+M-1$, so we omit it.
	Lastly, if $i_k = N+M$ and $i_{k-1} < N+M-1$, then
	\[
		\begin{aligned}
			d'_{i_1} + \cdots + d'_{i_k}
				&\leq d_{i_1} + \cdots + d_{i_k} + d_{N+M-1} - 1 \\
				&< \left|\left(\bigcup_{j=1}^k \calN_{D(K_N \triangle \calM')}(i_j)\right)\cup\calN_{D(K_N \triangle \calM')}(N+M-1)\right| - |\{\overline{N+M+1}\}| \\
				&= \left|\bigcup_{j=1}^k \calN_{D(K_N \triangle \calM')}(i_j)\right| - |\{\overline{N+M+1}\}| \\
				&= \left|\bigcup_{j=1}^k \calN_{D(K_N \triangle \calM)}(i_j)\right|.
		\end{aligned}
	\]
	Thus, $d' \in \fkD(K_N \triangle \calM)$, as desired.
	Because a symmetric argument holds if $d_{N+M-1} > 0$, this completes the case in which $d_{N+M+1} = 0$.
	
	Finally, suppose $d_{N+M+1} = 2$.
	Showing that $(d_1,\dots, d_{N+M-1}+1, d_{N+M}+1, 0) \in \scrBtri_{K_N \triangle \calM}(e)$ is equivalent to showing that $(d_1,\dots, d_{N+M-1}, d_{N+M}+1) \in \fkD(K_N \triangle \calM)$, and it is this latter statement that we will establish.
	For notational convenience, set 
	\[
		d'' = (d''_1,\dots,d''_{N+M}) = (d_1,\dots, d_{N+M-1}, d_{N+M}+1).
	\]
	
	Consider a sum of the form $d''_{i_1} + \cdots + d''_{i_k}$ where $1 \leq i_1 < \cdots < i_k \leq N+M$. 
	As before, if $i_k < N+M-1$, then the neighbors of $i_j$ are the same in $D(K_N \triangle \calM')$ and $D(K_N \triangle \calM)$, so the corresponding draconian inequality immediately holds. 
	If $i_k = N+M-1$, then, using the fact that $\calN_{D(K_N \triangle \calM')}(N+M+1) \subseteq \calN_{D(K_N \triangle \calM)}(N+M-1)$,
	\[
		\begin{aligned}
			d''_{i_1} + \cdots + d''_{i_k} &< d_{i_1} + \dots + d_{i_{k-1}} + d_{N+M-1} + 1 (+1 - 1) \\
				&< \left|\left(\bigcup_{j=1}^k \calN_{D(K_N \triangle \calM')}(i_j)\right) \cup \calN_{D(K_N \triangle \calM')}(N+M+1)\right| - |\{\overline{N+M+1}\}| \\
				&= \left|\bigcup_{j=1}^k \calN_{D(K_N \triangle \calM')}(i_j)\right| - |\{\overline{N+M+1}\}| \\
				&= \left|\bigcup_{j=1}^k \calN_{D(K_N \triangle \calM)}(i_j)\right|.
		\end{aligned}
	\]
	If $i_k = N+M$, then the same reasoning as in the case $i_k = N+M-1$ applies, with the only change being that the first strict inequality becomes a weak inequality.
	In all instances, the corresponding $D(K_N \triangle \calM)$-inequalities hold, showing that $d'' \in \fkD(K_N \triangle \calM)$.
	A symmetric argument also shows that $(d_1,\dots,d_{N+M-1}+1,d_{N+M}) \in \fkD(K_N \triangle \calM)$.
	This completes the case for $d_{N+n+1} = 2$.
	
	The completion of the above three cases shows that
	\[
		\scrAtri_{K_N \triangle \calM}(e) \uplus \scrBtri_{K_N \triangle \calM}(e) \uplus \scrCtri_{K_N \triangle \calM}(e) = \fkD(K_N \triangle \calM').
	\]
	Therefore, by our inductive assumption and the fact that $\alphatri,\betatri,\gammatri$ are all injections, 
	\[
		\begin{aligned}
			|\fkD(K_N \triangle \calM')| &= |\scrAtri_{K_N \triangle \calM}(e)| + |\scrBtri_{K_N \triangle \calM}(e)| + |\scrCtri_{K_N \triangle \calM}(e)| \\
				&= 3|\fkD(K_N \triangle \calM)| \\
				&= 3^{M+1}\binom{2(N-1)}{N-1},
		\end{aligned}
	\]
	as desired.
\end{proof}

\section{Deleting edges from $K_N$}\label{sec: deleting from complete}

In this section, we will obtain formulas for $\NVol(\APQ_G)$ where $G$ is constructed in a way different than that of Section~\ref{sec: extending triangles}.
Rather than introduce new new vertices and edges, we will instead remove edges.

\subsection{Deleting a path}

In this section, we will examine what happens when we delete a path from the complete graph $K_N$.
It will help to introduce some new notation.
If $a \leq b$ are integers, let $[a,b]$ denote the set $\{a,a+1,\dots,b\}$.

\begin{lem}\label{lem: deleting path}
	Let $N \geq 4$ and let $P$ be the length-$M$ path on $[N-M,N]$ where $uv$ is an edge if and only $|u-v|=1$.
	Set
	\[
		\calP_1 = \{ \e_i + (N-2)\e_j \in \RR^N \mid 1 \leq i \leq N,\, N-M+1 \leq j \leq N-1\}
	\]
	and
	\[
		\calP_2 = \{ r\e_i + (N-1-r)\e_{i+2} \in \RR^N \mid 0 \leq r \leq N-1,\, N-M \leq i  \leq N-2\}.
	\]
	Then $\fkD(K_N) \setminus \fkD(K_N \setminus E(P)) = \calP_1 \cup \calP_2$.
\end{lem}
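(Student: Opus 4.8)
The plan is to recognize $\fkD(K_N) \setminus \fkD(K_N \setminus E(P))$ as exactly the set of sequences $c \in \ZZ_{\geq 0}^N$ with $\sum_i c_i = N-1$ that satisfy every $D(K_N)$-draconian inequality but violate at least one $D(K_N \setminus E(P))$-draconian inequality. First I would record that $D(K_N) = K_{N,\overline N}$, so $\calN_{D(K_N)}(i) = [\overline N]$ for every $i$; hence each partial sum of such a $c$ is at most $N-1 < N$, and $\fkD(K_N)$ consists of all nonnegative integer sequences of length $N$ summing to $N-1$. Writing $G = K_N \setminus E(P)$, the whole problem reduces to describing which of these sequences fail some $D(G)$-inequality.

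The crux is a neighborhood-deficiency computation in $D(G)$. For distinct $i,j$ one has $\overline j \notin \calN_{D(G)}(i)$ precisely when $ij \in E(P)$, i.e.\ when $j$ is a path-neighbor of $i$. Consequently, for a set $S$, a vertex $\overline j$ is absent from $\bigcup_{i \in S} \calN_{D(G)}(i)$ iff every element of $S$ is a path-neighbor of $j$. Since each vertex of $P$ has at most two path-neighbors, this forces $|S| \leq 2$; for $|S| \geq 3$ the union is all of $[\overline N]$ and, as every partial sum is $< N$, no $D(G)$-inequality can be violated. Thus only $|S| \in \{1,2\}$ can produce violations, and I would analyze these two cases.

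For $S = \{i\}$, the union has size $N - \deg_P(i)$, so a violation means $c_i \geq N - \deg_P(i)$. Interior path vertices ($N-M+1 \leq i \leq N-1$, with $\deg_P(i) = 2$) give $c_i \in \{N-2, N-1\}$, which is exactly the family $\calP_1$: the choice $c_i = N-1$ is $(N-1)\e_i = \e_i + (N-2)\e_i$, and $c_i = N-2$ forces a single remaining unit $\e_{i'}$, giving $\e_{i'} + (N-2)\e_i$. Path endpoints ($i \in \{N-M, N\}$, $\deg_P(i) = 1$) instead force $c_i = N-1$, i.e.\ $(N-1)\e_i$. For $S = \{i, i'\}$, the deficiency analysis shows the union drops below $N$ only when $\{i,i'\} = \{j-1, j+1\}$ for a path vertex $j$, i.e.\ $i' = i+2$ with $i, i+1, i+2$ all on the path ($N-M \leq i \leq N-2$); then the union has size $N-1$ and a violation forces $c_i + c_{i+2} = N-1$, hence all weight on these two coordinates: $c = r\e_i + (N-1-r)\e_{i+2}$. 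This is exactly $\calP_2$.

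Finally I would assemble the two inclusions. Every violating sequence falls into one of the cases above and hence lies in $\calP_1 \cup \calP_2$; conversely each listed sequence does violate the advertised inequality and sums to $N-1$, so it lies in $\fkD(K_N) \setminus \fkD(G)$. The step needing the most care is bookkeeping at the path endpoints: the endpoint singleton violations $(N-1)\e_{N-M}$ and $(N-1)\e_N$ are \emph{not} captured by $\calP_1$ (whose index ranges only over the interior vertices $N-M+1, \dots, N-1$), but they reappear as the extreme cases $r = N-1,\ i = N-M$ and $r = 0,\ i = N-2$ of $\calP_2$. Verifying that these boundary terms are absorbed exactly, with no sequence erroneously excluded, is the main obstacle, and it is precisely what pins down the index ranges in the definitions of $\calP_1$ and $\calP_2$.
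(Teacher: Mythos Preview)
Your argument is correct and reaches the same conclusion as the paper, but the organization is a bit different and arguably cleaner. The paper establishes the reverse inclusion by casing on the number of nonzero entries of $c$ (one, two, or at least three), and in the last case appeals to ``now-routine arguments''; you instead case on the size $|S|$ of a violating index set. Your structural observation---that $\overline j$ is missing from $\bigcup_{i\in S}\calN_{D(G)}(i)$ exactly when $S$ is contained in the set of path-neighbors of $j$, which has at most two elements---immediately dispatches $|S|\ge 3$ and pins down the $|S|=2$ case to pairs $\{i,i+2\}$ along the path. This parametrization by $|S|$ rather than by the support of $c$ is more direct, since a sequence with several nonzero entries could a priori still violate a two-index inequality, a possibility your framing never needs to exclude separately. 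Your handling of the endpoint terms $(N-1)\e_{N-M}$ and $(N-1)\e_N$ as the extreme members of $\calP_2$ (rather than $\calP_1$) matches the paper's exactly.
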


\begin{proof}
	First, it is clear that $\calP_1 \cup \calP_2 \subseteq \fkD(K_N)$ since each sequence in $\calP_1 \cup \calP_2$ is a weak composition of $N-1$ into $N$ parts, and $|\calN_{D(K_N)}(i)| = N$ for each $i \in [N]$.
	For ease of notation as we continue, let $G = K_N \setminus E(P)$.
	
	For each vertex $i \in [N-M+1,N-1]$, $\left|\calN_{D(G)}(i)\right| = N-2$, since neither of $\overline{i-1}$ nor $\overline{i+1}$ are a neighbor of $i$ in $D(G)$.
	Hence, any sequence in $\fkD(K_N)$ in which the $i$th coordinate is at least $N-2$, where $N-M+1 \leq i \leq N-1$, cannot be a sequence in $\fkD(G)$; these are the sequences in $\calP_1$.
	Moreover, we further know that for each $i \in [N-M+1,N-1]$, $\left|\calN_{D(G)}(i-1)\cup\calN_{D(G)}(i+1)\right| = N-1$. 
	It follows that any sequence in $\fkD(K_N)$ in which the $(i-1)$th and $(i+1)$th coordinates sum to $N-1$, where $N-M+1 \leq i \leq N-1$, cannot be a sequence in $\fkD(G)$; these are the sequences in $\calP_2$.
	Therefore, $(\calP_1 \cup \calP_2) \cap \fkD(G) = \emptyset$, and $\calP_1 \cup \calP_2 \subseteq \fkD(K_N) \setminus \fkD(K_N \setminus E(P))$.
	
	We now need to show that if $c = (c_1,\dots,c_N) \in \fkD(K_N) \setminus \fkD(G)$, then $c \in \calP_1 \cup \calP_2$.
	There are several cases to consider.
	
	First suppose $c_\ell > 0$ for exactly one $\ell \in [N]$, and consider a sum of the form $c_{i_1} + \cdots + c_{i_k}$ with $1 \leq i_1 < \cdots < i_k \leq N$.
	If $i_j \neq \ell$ for all $j$, then the inequality
	\[
		c_{i_1} + \cdots + c_{i_k} = 0 < \left|\bigcup_{j=1}^k \calN_{D(K_N \setminus E(G))}(i_j)\right| \leq \left|\bigcup_{j=1}^k \calN_{D(K_N)}(i_j)\right|
	\]
	always holds.
	On the other hand, if $i_j = \ell$ for some $j$, then the inequalities of the form
	\[
		c_{i_1} + \cdots + c_{i_k} = c_\ell = N-1 < \left|\calN_{D(K_N \setminus E(G))}(\ell)\right| = \left|\bigcup_{j=1}^k \calN_{D(K_N \setminus E(G))}(i_j)\right|
	\]
	all hold if and only if $\ell < N-M$.
	Therefore, the sequences $(N-1)\e_{N-M},\dots,(N-1)\e_{N} \in \fkD(K_N) \setminus \fkD(G)$.
	The sequences $(N-1)\e_{N-M}$ and $(N-1)\e_N$ are in $\calP_2$, and the rest are in $\calP_1$.

	Next suppose that there are exactly two indices $1 \leq \ell < m \leq N-1$ for which $c_\ell, c_m > 0$.
	There are three ways in which a $D(G)$-draconian inequality can fail to hold:
	\begin{enumerate}
		\item if $\left|\calN_{D(G)}(\ell)\right| \leq c_\ell$;
		\item if $\left|\calN_{D(G)}(m)\right| \leq c_m$; and
		\item if $\left|\calN_{D(G)}(\ell)\cup\calN_{D(G)}(m)\right| \leq N-1$.
	\end{enumerate}
	
	We first examine when $\left|\calN_{D(G)}(\ell)\right| \leq c_\ell$.
	By construction, we know that for any $i \in [N]$, $N-2 \leq \left|\calN_{D(G)}(i)\right|$. 
	If $\left|\calN_{D(G)}(\ell)\right| = N-1$, then this would mean $c_\ell = N-1$.
	This would require $c_m = 0$, which is a contradiction to the case we are in.
	So, the only way to have $\left|\calN_{D(G)}(\ell)\right| \leq c_\ell$ would be if $c_\ell = N-2$, in which case $c_m = 1$.
	This is only possible when $\ell \in [N-M+1,N-1]$; each of the resulting sequences are in $\calP_1$.
	A symmetric argument holds if $\left|\calN_{D(G)}(m)\right| \leq c_m$.
	
	Now suppose $\left|\calN_{D(G)}(\ell)\cup\calN_{D(G)}(m)\right| \leq N-1$; we may freely assume $\ell < m$.
	It is immediate that for this to happen, we have $\ell, m \in [N-M,N]$.
	Moreover, it is straightforward to see that if $m - \ell \neq 2$, then $\left|\calN_{D(G)}(\ell)\cup\calN_{D(G)}(m)\right| = N$, hence, $m = \ell + 2$.
	These two coordinates in $c$ must sum to $N-1$; all such sequences satisfying these conditions are in $\calP_2$.
	Therefore, if $c \in \fkD(K_N) \setminus \fkD(G)$ has exactly two nonzero coordinates, then $c \in \calP_1 \cup \calP_2$.

	Lastly, suppose $c \in \fkD(K_N)$ has at least three nonzero coordinates.
	This time, if $c_\ell, c_m, c_n > 0$ with $1 \leq \ell < m < n \leq N$, then each is at most $N-3$.
	Also, the sum of any two will be at most $N-2$, and the union of the two corresponding sets of neighbors in $D(G)$ will have at least $N-1$ elements.
	Continuing via now-routine arguments, one sees that $c \in \fkD(G)$.
	Therefore, we have established the reverse inclusion $\fkD(K_N) \setminus \fkD(G) \subseteq \calP_1 \cup \calP_2$, completing our proof.
\end{proof}

Lemma~\ref{lem: deleting path} is enough to determine what happens when a path is deleted from a complete graph.

\begin{thm}
	Let $N \geq 4$ and $0 \leq M < N$.
	If $P$ is a length-$M$ path in $K_N$, then
	\[
		\NVol(\APQ_{K_N \setminus E(P)}) = \binom{2(N-1)}{N-1} - (2N - 4)(M - 1) + 4.
	\]
\end{thm}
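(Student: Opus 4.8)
The plan is to convert the statement into a counting problem using Lemma~\ref{lem: deleting path} and then evaluate the resulting count. First I would note that $K_N \setminus E(P)$ is connected for $N \geq 4$ and $M < N$ — any vertex not on $P$ is adjacent to all others, and the path vertices retain more than enough edges to stay joined — so that Theorem~\ref{thm: translation} applies and gives $\NVol(\APQ_{K_N \setminus E(P)}) = |\fkD(K_N \setminus E(P))|$. By Remark~\ref{rmk: relabeling invariant} I may relabel the vertices so that $P$ is exactly the path on $[N-M,N]$ appearing in Lemma~\ref{lem: deleting path}. Since that lemma identifies precisely the draconian sequences destroyed by the deletion, I obtain
\[
	|\fkD(K_N \setminus E(P))| = |\fkD(K_N)| - |\calP_1 \cup \calP_2|,
\]
so the whole theorem reduces to evaluating the single quantity $|\calP_1 \cup \calP_2|$.

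The first term on the right is already understood: every weak composition of $N-1$ into $N$ nonnegative parts is $D(K_N)$-draconian, because each neighborhood $\calN_{D(K_N)}(i)$ has size $N$ and the total of the parts is only $N-1$. Hence $|\fkD(K_N)| = \binom{2(N-1)}{N-1}$, in agreement with \cite[Proposition 2.10]{DavisChen}, and it remains to count the \emph{distinct} sequences in $\calP_1 \cup \calP_2$. Rather than apply inclusion--exclusion blindly, I would partition $\calP_1 \cup \calP_2$ according to the size of the support of each sequence. The sequences with a single nonzero coordinate are exactly the $(N-1)\e_k$; tracking which $k$ actually occur (the $i=j$ members of $\calP_1$, together with the $r \in \{0,N-1\}$ members of $\calP_2$) shows that they range over one contiguous block of indices. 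The sequences with two nonzero coordinates split into the $\{1,N-2\}$-valued sequences coming from $\calP_1$ and the $\{r,N-1-r\}$-valued sequences coming from $\calP_2$, each an easily enumerated family indexed by a chosen coordinate pair and a value split.

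Assembling the single-support and two-support contributions and then removing the sequences counted in both families gives a closed form for $|\calP_1 \cup \calP_2|$ whose leading term is $(2N-4)(M-1)$, with the remaining terms contributing only a bounded correction. Subtracting this from $\binom{2(N-1)}{N-1}$ then yields the closed form in the statement.

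I expect the main obstacle to be precisely the bookkeeping of these coincidences. The degenerate sequences $(N-1)\e_k$ are produced by several distinct parameter choices, both internally within $\calP_2$ (as the $r=0$ endpoint for one pair and the $r=N-1$ endpoint for the neighboring pair) and across the two families; simultaneously, some of the two-support $\calP_2$-sequences happen to have value split exactly $\{1,N-2\}$ and so coincide with members of $\calP_1$. Moreover the index ranges defining $\calP_1$ and $\calP_2$ interact differently for small $M$ (roughly $M \leq 3$ versus $M \geq 4$) and near the ends of the interval $[N-M,N]$. Ensuring that every overlap is counted exactly once, and checking the small-$M$ boundary behavior separately so that the one closed form is valid across the whole stated range, is where the real care is required; the underlying $D(G)$-draconian verifications are otherwise routine once Lemma~\ref{lem: deleting path} is in hand.
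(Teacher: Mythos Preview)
Your proposal is correct and follows essentially the same approach as the paper: apply Theorem~\ref{thm: translation} and Remark~\ref{rmk: relabeling invariant}, invoke Lemma~\ref{lem: deleting path} to identify $\fkD(K_N)\setminus\fkD(K_N\setminus E(P))=\calP_1\cup\calP_2$, and then reduce everything to computing $|\calP_1\cup\calP_2|$. The only difference is cosmetic bookkeeping---the paper uses straight inclusion--exclusion ($|\calP_1|+|\calP_2|-|\calP_1\cap\calP_2|$) while you partition by support size---but both organize the same finite count and confront the same overlaps you flag.
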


\begin{proof}
	Since $N \geq 4$, $K_N \setminus E(P)$ is connected, and we may apply Theorem~\ref{thm: translation}.
	By Remark~\ref{rmk: relabeling invariant}, we assume that the edges of $P$ can be written as $ij$ where $j=i+1$ and $N-M \leq i \leq N-1$.
	With the help of Lemma~\ref{lem: deleting path}, we know that this is
	\[
		|\fkD(K_N)| - |\calP_1 \cup \calP_2| = \binom{2(N-1)}{N-1} - |\calP_1| - |\calP_2| + |\calP_1 \cap \calP_2|.
	\] 
	
	It is immediate to see that $|\calP_1| = N(M-1)$.
	In $\calP_2$ there is some redundancy, since, each vector of the form $0\e_i + (N-1)\e_{i+2}$ with $i \in [N-M,N-4]$ is also obtained as $(N-1)\e_{i+2} + 0\e_{i+4}$.
	Thus, $|\calP_2| = N(M-1) - (M-3)$.
	Computing $|\calP_1 \cap \calP_2 = 2(M-1)+(M-2)$ is similarly straightforward, so we omit the details.
	Therefore,
	\[
		\begin{aligned}
			|\fkD(K_N \setminus E(P))| &= \binom{2(N-1)}{N-1} - \left(N(M-1) + N(M-1) - (M-3) + \left(2(M-2)+(M-1)\right)\right) \\
				&= \binom{2(N-1)}{N-1} - (2N - 4)(M - 1) + 4,
		\end{aligned}
	\]
	as desired.
\end{proof}

\subsection{Deleting a cycle}

In this section we examine what happens when the edges of a cycle are deleted from $K_N$.
To prove our main result, we establish more notation.
Throughout this section, let $N \geq 5$ and let $C_M^{\circ} = (V_M^{\circ}, E_M^{\circ})$ be an $M$-element cycle subgraph of $K_N$. 
Label the elements of $V^\circ$ by $v_0,\dots,v_{M-1}$ so that $v_i, v_j$ form an edge if and only if $j-i = \pm 1 \pmod{M}$. 
We then denote by $\scrXcir(C_M^\circ)$ the set of sequences
\[
	\scrXcir(C_M^\circ) = \{(N-2)\e_{v_i} + \e_j \in \RR^N \mid 0 \leq i \leq M-1,\, j \in [N]\}.
\]
Denote by $\scrYcir(C_M^{\circ})$ the set of sequences
\[
	\scrYcir(C_M^{\circ}) = \{r\e_{v_i} + (N-1-r)\e_{v_{i+2}} \in \RR^N \mid v_i \in V_M^\circ,\, 2 \leq r \leq N-3\}.
\]
where the subscripts on the indices are taken modulo $M$.
When $M = 4$, we will need the following third set of sequences:
\[
	\scrZcir(C_M^\circ) = \{r\e_{v_i} + (N-2-r)\e_{v_{i+2}} + \e_s \mid v_i \in V_M^{\circ}, \, 1 \leq r \leq N-3, \, s \in [N] \setminus \{v_i,v_{i+2}\}\}.
\]

\begin{lem} \label{lem: disjoint}
	The sets $\scrXcir(C_M^\circ)$, $\scrYcir(C_M^{\circ})$ and $\scrZcir(C_M^\circ)$ are pairwise disjoint.
\end{lem}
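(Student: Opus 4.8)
The plan is to separate the three families by two coarse invariants of a sequence $c \in \RR^N$: the number of its nonzero coordinates, and the value of its largest coordinate. I would first record these invariants for each family and then check that any pair of families already differs in at least one of them, which is enough to force disjointness as subsets of $\RR^N$.

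First I would count nonzero entries. A member of $\scrZcir(C_M^\circ)$ has the form $r\e_{v_i} + (N-2-r)\e_{v_{i+2}} + \e_s$ with $1 \le r \le N-3$, so both $r$ and $N-2-r$ lie in $[1,N-3]$ and are in particular positive; since the cycle has $M \geq 3$ vertices we have $v_i \neq v_{i+2}$, and $s \notin \{v_i, v_{i+2}\}$ by definition, so all three indices are distinct and every such sequence has \emph{exactly} three nonzero coordinates. By contrast a member of $\scrYcir(C_M^\circ)$ has exactly two nonzero coordinates (both $r$ and $N-1-r$ are positive because $2 \le r \le N-3$), and a member of $\scrXcir(C_M^\circ)$ has at most two (it is $(N-1)\e_{v_i}$ in the degenerate case $j = v_i$, and has two nonzero coordinates otherwise). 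Hence $\scrZcir(C_M^\circ)$ is disjoint from both $\scrXcir(C_M^\circ)$ and $\scrYcir(C_M^\circ)$ on the count of nonzero entries alone.

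It then remains to separate $\scrXcir(C_M^\circ)$ from $\scrYcir(C_M^\circ)$, and for this I would use the largest coordinate. Every element of $\scrXcir(C_M^\circ)$ has its $v_i$-coordinate equal to $N-2$ (or to $N-1$ in the degenerate case $j = v_i$), so its maximum entry is at least $N-2$. Every element of $\scrYcir(C_M^\circ)$ has both nonzero entries in $[2,N-3]$, so its maximum entry is at most $N-3$. Because $N \geq 5$ gives $N-3 < N-2$, no element of one of these two families can coincide with an element of the other. Together with the previous paragraph this yields all three pairwise disjointness statements.

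The computations are routine, so the only thing to watch is the degenerate index coincidences: I must confirm $v_i \neq v_{i+2}$ (valid since $M \geq 3$) and that the $\e_s$ summand in $\scrZcir(C_M^\circ)$ genuinely lands on a coordinate distinct from $v_i$ and $v_{i+2}$, so that the ``exactly three nonzero coordinates'' claim cannot silently collapse to two. The one mild subtlety worth flagging is that $\scrXcir(C_M^\circ)$ is not a pure two-support family — it contains the single-support sequences $(N-1)\e_{v_i}$ — but these are absorbed by both invariant arguments without incident, so no extra case analysis is needed.
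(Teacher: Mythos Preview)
Your proof is correct and takes essentially the same approach as the paper: both arguments separate the three families using the same two invariants, the maximum entry (at least $N-2$ for $\scrXcir$, at most $N-3$ for $\scrYcir \cup \scrZcir$) and the number of nonzero coordinates (two for $\scrYcir$, three for $\scrZcir$). You apply them in the opposite order from the paper and are more explicit about the degenerate $j=v_i$ case and the check $v_i\neq v_{i+2}$, but the substance is identical.
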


\begin{proof}
	Note that each sequence in $\scrXcir(C_M^\circ)$ has a nonzero entry of $N-2$, while each entry of each sequence in $\scrYcir(C_M^\circ) \cup \scrYcir(C_M^\circ)$ is at most $N-3$.
	Thus, $\scrXcir(C_M^\circ) \cap \scrYcir(C_M^\circ) = \emptyset$ and $\scrXcir(C_M^\circ) \cap \scrZcir(C_M^\circ) = \emptyset$. 
	To see that $\scrYcir(C_M^\circ) \cap \scrZcir(C_M^\circ) = \emptyset$, note that each sequence in $\scrYcir(C_M^\circ)$ has exactly two nonzero entries while each sequence in $\scrZcir(C_M^\circ)$ has exactly three nonzero coordinates.
	Therefore, the three sets are pairwise disjoint.
\end{proof}

\begin{lem} \label{lem: lost sequences}
	For each $N \geq 5$ and $3 \leq M \leq N$,
	\[
		\fkD(K_N) \setminus \fkD(K_N \setminus E_M^\circ) = 
			\begin{cases}
				\scrXcir(C_M^\circ) \uplus \scrYcir(C_M^\circ) & \text { if } M \neq 4, \\
				\scrXcir(C_4^\circ) \uplus \scrYcir(C_4^\circ) \uplus \scrZcir(C_4^\circ) & \text{ if } M = 4.
			\end{cases}
	\]
\end{lem}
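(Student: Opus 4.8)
The plan is to follow the strategy of Lemma~\ref{lem: deleting path}, establishing the two inclusions separately and invoking Lemma~\ref{lem: disjoint} to upgrade the unions to disjoint unions. Write $G = K_N \setminus E_M^\circ$ throughout. For the forward inclusion, I would first check that every sequence listed on the right-hand side is a weak composition of $N-1$ into $N$ parts: the entries of a member of $\scrXcir(C_M^\circ)$ sum to $(N-2)+1$, those of $\scrYcir(C_M^\circ)$ to $r+(N-1-r)$, and those of $\scrZcir(C_4^\circ)$ to $r+(N-2-r)+1$, each equal to $N-1$. Hence all three sets lie in $\fkD(K_N)$, exactly as in the opening of Lemma~\ref{lem: deleting path}. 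I would then exhibit, for each such sequence, a single $G$-draconian inequality that fails: the singleton $\{v_i\}$ for a member of $\scrXcir(C_M^\circ)$, where $c_{v_i} = N-2 = |\calN_{D(G)}(v_i)|$, and the pair $\{v_i, v_{i+2}\}$ for members of $\scrYcir(C_M^\circ)$ and $\scrZcir(C_4^\circ)$.

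The heart of the argument is the computation of $\left|\bigcup_{i \in S} \calN_{D(G)}(i)\right|$ for $S \subseteq [N]$, which dictates which inequalities can fail. Since deleting the cycle removes exactly the edges $v_i v_{i\pm 1}$, one has $\calN_{D(G)}(v_\ell) = [\overline N] \setminus \{\overline{v_{\ell-1}}, \overline{v_{\ell+1}}\}$ for a cycle vertex and $\calN_{D(G)}(i) = [\overline N]$ otherwise. Taking complements, $\left|\bigcup_{i \in S} \calN_{D(G)}(i)\right| = N$ unless $S \subseteq V_M^\circ$ and the cycle vertices in $S$ share a common cycle-neighbor. Because $\sum_i c_i = N-1 < N$, a draconian inequality can fail only for such an $S$; and since a vertex has only two cycle-neighbors, any $S$ with a common cycle-neighbor has $|S| \leq 2$, with $|S| = 2$ forcing $S = \{v_\ell, v_{\ell+2}\}$. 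The decisive observation is that when $M = 4$ the vertices $v_\ell$ and $v_{\ell+2}$ have identical neighborhoods in $D(G)$ (both omit $\overline{v_{\ell+1}}$ and $\overline{v_{\ell-1}} = \overline{v_{\ell+3}}$), so their union has size $N-2$ rather than $N-1$; this is precisely what produces the extra sequences of $\scrZcir(C_4^\circ)$.

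For the reverse inclusion I would take $c \in \fkD(K_N) \setminus \fkD(G)$, choose a failing set $S$, and apply the classification. If $S = \{v_\ell\}$, then $c_{v_\ell} \geq N-2$, so $c$ is $(N-1)\e_{v_\ell}$ or $(N-2)\e_{v_\ell} + \e_j$, both in $\scrXcir(C_M^\circ)$. If $S = \{v_\ell, v_{\ell+2}\}$ and $M \neq 4$, then $c_{v_\ell} + c_{v_{\ell+2}} \geq N-1$ forces equality with all other entries zero, giving $c = r\e_{v_\ell} + (N-1-r)\e_{v_{\ell+2}}$, which lies in $\scrXcir(C_M^\circ)$ when $r \in \{0,1,N-2,N-1\}$ and in $\scrYcir(C_M^\circ)$ when $2 \leq r \leq N-3$. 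If $S = \{v_\ell, v_{\ell+2}\}$ and $M = 4$, then $c_{v_\ell} + c_{v_{\ell+2}} \geq N-2$: the value $N-1$ is handled as before, while the value $N-2$ leaves exactly one further entry equal to $1$, yielding $c = r\e_{v_\ell} + (N-2-r)\e_{v_{\ell+2}} + \e_s$, which lies in $\scrXcir(C_4^\circ)$ if $r \in \{0, N-2\}$ and in $\scrZcir(C_4^\circ)$ if $1 \leq r \leq N-3$.

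The main obstacle is the $M = 4$ case together with the bookkeeping at the extreme values of $r$. The neighborhood coincidence for opposite vertices is what forces a genuinely new family of lost sequences, and one must check that both possibilities $c_{v_\ell}+c_{v_{\ell+2}} = N-1$ and $c_{v_\ell}+c_{v_{\ell+2}} = N-2$ occur and are sorted correctly. Equally, the restrictions $2 \leq r \leq N-3$ in $\scrYcir$ and $1 \leq r \leq N-3$ in $\scrZcir$ must be verified to be exactly what peels the boundary values of $r$ off into $\scrXcir$, so that no sequence belongs to two families and the disjoint union of Lemma~\ref{lem: disjoint} is compatible with the asserted set equality. The remaining configurations — a support of size one away from the cycle, or of size at least three — contribute nothing by the neighborhood count, and are dispatched exactly as the analogous final case of Lemma~\ref{lem: deleting path}.
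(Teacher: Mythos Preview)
Your proposal is correct and follows essentially the same strategy as the paper: both directions rest on computing $\left|\bigcup_{i\in S}\calN_{D(G)}(i)\right|$ and isolating the special $M=4$ coincidence where $v_\ell$ and $v_{\ell+2}$ have identical neighborhoods in $D(G)$. The only difference is organizational---you classify by the structure of a failing index set $S$ whereas the paper classifies by the number of nonzero entries of $c$---and your version handles the case of four or more nonzero entries a bit more cleanly, since the bound $|S|\le 2$ disposes of it at once.
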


\begin{proof}
	We will first show that $\scrXcir(C_M^\circ) \cup \scrYcir(C_M^\circ) \subseteq \fkD(K_N) \setminus \fkD(K_N \setminus E_M^\circ)$ for all $M$, and that $\scrZcir(C_4^\circ) \subseteq \fkD(K_N) \setminus \fkD(K_N \setminus E_4^\circ)$.
	By Remark~\ref{rmk: relabeling invariant}, we may assume that $V_M^\circ = [N-M+1,N]$ and $ij \in E_M^\circ$ if and only if $j - i = \pm 1 \pmod{M}$.
	Since each sequence $c \in \scrXcir(C_M^\circ) \cup \scrYcir(C_M^\circ) \cup \scrZcir(C_M^\circ)$ is a weak composition of $N-1$ into $N$ parts, we know $c \in \fkD(K_N)$.

	For any sequence $c = (c_1,\dots,c_N) \in \scrXcir(C_M^\circ)$, there exists some $N-M+1 \leq i \leq N$ such that $c_i \geq N-2$. 
	However, since we are removing the edges in $E_M^\circ$, every vertex $i \in [N-M+1,N]$ satisfies $\deg_{K_N \setminus E_M^\circ}(i) = N-3$, meaning that $c_i \geq |\calN_{D(K_N \setminus E_M^\circ)}(i)|$ for some $i$.
	This violates one of the draconian inequalities, so $c \notin \fkD(K_N \setminus E_M^\circ)$, and it follows that $\scrXcir(C_M^\circ) \subseteq \fkD(K_N) \setminus \fkD(K_N \setminus E_M^\circ)$.

	Next, for any $c \in \scrYcir(C_M^\circ)$, there are some $i,j \in [N-M+1,N]$ such that $c_i + c_j = N-1$ and $j-i = \pm 2 \pmod{M}$.
	However, in $K_N \setminus E_M^\circ$, neither of these vertices will have $i+1 \pmod{M}$ as a neighbor.
	Therefore,
	\[
		|\calN_{D(K_N \setminus E_M^\circ)}(i) \cup \calN_{D(K_N \setminus E_M^\circ)}(j)| \leq N-1 = c_i + c_j,
	\]
	implying $c \notin \fkD(K_N \setminus E_M^\circ)$.
	It again follows that $\scrYcir(C_M^\circ) \subseteq \fkD(K_N) \setminus \fkD(K_N \setminus E_M^\circ)$.
	
	When $M=4$, deleting the edges in $C_4^\circ$ means $i$ and $j$ will have \emph{neither} $i+1$ nor $j+1$ as common neighbors when $i,j \in [N-3,N]$ and $j - i = \pm 2 \pmod{M}$.
	Using analogous reasoning as in the previous paragraph, we obtain $\scrZcir(C_4^\circ) \subseteq \fkD(K_N) \setminus \fkD(K_N \setminus E_4^\circ)$.
	It now remains to establish the reverse inclusions.

	Let $c \in \fkD(K_N) \setminus \fkD(K_N \setminus E_M^\circ)$.
	First, if $c$ only has one nonzero entry, then the value of that entry is $N-1$.
	Since $|\calN_{D(K_N \setminus E_M^\circ)}(i)| \leq N-1$ if and only if $i \in [N-M+1,N]$, then $c = (N-1)\e_i$ for some $i \in [N-M+1,N]$.
	Thus, $c \in \scrXcir(C_M^\circ)$.
	
	Now suppose $c$ has two nonzero entries, $c_i$ and $c_j$ with $c_i \leq c_j$.
	If $c_j = N-2$, then, by an argument similar to that of the previous paragraph, $c \in \scrXcir(C_M^\circ)$.
	Otherwise, $2 \leq c_i \leq c_j \leq N-3$.
	We already know that $\deg_{D(K_N \setminus E_M^\circ)}(\ell) \in \{N-2,N\}$ for each $\ell \in [N]$.
	Because $N \geq 5$, this means
	\[
		2 = c_i \leq c_j = N-3 < N-2 \leq \deg_{D(K_N \setminus E_M^\circ)}(\ell)
	\]
	for all $\ell$, namely, when $\ell \in \{i,j\}$.
	Hence, since $c \in \fkD(K_N) \setminus \fkD(K_N \setminus E_M^\circ)$, we know that the following inequality holds:
	\[
		N-1 = c_i + c_j \geq |\calN_{D(K_N \setminus E_M^\circ)}(i) \cup \calN_{D(K_N \setminus E_M^\circ)}(j)|.
	\]
	In order for the union to have fewer than $N$ elements, we need $i,j \in [N-M-1,N]$ and $j - i = \pm 2 \pmod{M}$.
	Therefore, $c \in \scrYcir(C_M^\circ)$.
	
	Now suppose that $c$ has three nonzero entries, $c_i,c_j,c_k$.
	In this case,
	\[
		c_\ell \leq N-3 < N-2 \leq |\calN_{D(K_N \setminus E_M^\circ)}(\ell)|
	\]
	for each $\ell = i,j,k$, and
	\[
		c_i + c_j + c_k = N-1 < N = \left|\bigcup_{\ell = i,j,k} \calN_{D(K_N \setminus E_M^\circ)}(\ell)\right|.
	\]
	Thus, since we know $c \in \fkD(K_N) \setminus \fkD(K_N \setminus E_M^\circ)$, an inequality of the following form must hold:
	\begin{equation}\label{eq: equality1}
		c_i + c_j \geq \left|\bigcup_{\ell = i,j} \calN_{D(K_N \setminus E_M^\circ)}(\ell)\right|
	\end{equation}
	Note, though, that we always have
	\begin{equation}\label{eq: equality2}
		c_i + c_j \leq N-2 \leq \left|\bigcup_{\ell = i,j} \calN_{D(K_N \setminus E_M^\circ)}(\ell)\right|
	\end{equation}
	so that the only way for both \eqref{eq: equality1} and \eqref{eq: equality2} to hold is if all of the inequalities are equalities.
	This occurs if and only if both $i,j \in [N-M+1,N]$ and $i$ and $j$ have the same neighbors in $K_N \setminus E_M^\circ$. 
	These conditions hold if and only if $M=4$ and and $j-i = \pm 2 \pmod{M}$, and subsequently $c_k = 1$; hence, $c \in \scrZcir(C_4^\circ)$.
	This establishes the desired reverse inclusions.
	By Lemma~\ref{lem: disjoint}, we are done.
\end{proof}

We are now ready to prove our main result of the section.

\begin{thm}
	For all $N \geq 5$ and $0 \leq M \leq N$,
	\[
		\NVol(\APQ_{K_N \setminus E_M^\circ}) =
			\begin{cases} 
				\binom{2(N-1)}{N-1} - 2M(N-2) &  \text{ if }M \neq 4 \\
				\binom{2(N-1)}{N-1} - 2(N+1)(N-2) & \text{ if } M = 4.
			\end{cases}
	\]
\end{thm}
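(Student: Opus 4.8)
The plan is to convert the normalized volume into a cardinality and then subtract off exactly the draconian sequences that are destroyed when the cycle is deleted. Since $N \geq 5$, removing the $M$ edges of $E_M^\circ$ lowers each vertex degree by at most $2$, so $K_N \setminus E_M^\circ$ has minimum degree at least $N-3 \geq 2$ and is readily seen to be connected; hence Theorem~\ref{thm: translation} applies and
\[
	\NVol(\APQ_{K_N \setminus E_M^\circ}) = |\fkD(K_N \setminus E_M^\circ)| = |\fkD(K_N)| - \left|\fkD(K_N) \setminus \fkD(K_N \setminus E_M^\circ)\right|.
\]
Using the known value $|\fkD(K_N)| = \binom{2(N-1)}{N-1}$, the whole problem reduces to counting the lost sequences. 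The case $M = 0$ (nothing deleted) gives the formula immediately, and the degenerate cases $M \leq 2$ carry no genuine cycle, so I would assume $3 \leq M \leq N$.

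For $3 \leq M \leq N$ I would invoke Lemma~\ref{lem: lost sequences}, which identifies the lost set with $\scrXcir(C_M^\circ) \uplus \scrYcir(C_M^\circ)$ when $M \neq 4$ and with $\scrXcir(C_4^\circ) \uplus \scrYcir(C_4^\circ) \uplus \scrZcir(C_4^\circ)$ when $M = 4$. Because Lemma~\ref{lem: disjoint} guarantees these sets are pairwise disjoint, the cardinality of the lost set is simply the sum of the individual cardinalities, so the theorem follows by evaluating
\[
	\binom{2(N-1)}{N-1} - |\scrXcir(C_M^\circ)| - |\scrYcir(C_M^\circ)|,
\]
and subtracting $|\scrZcir(C_4^\circ)|$ as well when $M = 4$.

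The remaining work is purely enumerative, and the only real care is in detecting when two index choices produce the same sequence. For $|\scrXcir(C_M^\circ)|$ I would observe that each sequence has a distinguished entry equal to $N-2 \geq 3$, which pins down the cycle vertex $v_i$, while the remaining unit entry pins down $j$; thus the $M \cdot N$ index choices are all distinct and $|\scrXcir(C_M^\circ)| = MN$. For $\scrYcir(C_M^\circ)$ each sequence is supported on a ``distance-two'' pair $\{v_i, v_{i+2}\}$ together with a split of $N-1$, and for $M \neq 4$ the pairs $\{v_i, v_{i+2}\}$ are genuinely distinct as $i$ varies, giving $|\scrYcir(C_M^\circ)| = M(N-4)$; the two terms then combine to $2M(N-2)$, which is the first branch. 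The delicate point — and the step I expect to be the main obstacle — is the $M=4$ case: a $4$-cycle has only the two diagonals $\{v_0,v_2\}$ and $\{v_1,v_3\}$, so the indices $i$ and $i+2$ describe the \emph{same} support pair, and the swap $r \leftrightarrow N-1-r$ (respectively $r \leftrightarrow N-2-r$ for $\scrZcir$) identifies the corresponding sequences. One therefore has to quotient the naive counts by this two-fold symmetry before adding in $|\scrZcir(C_4^\circ)|$.

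I would finish by assembling the three counts and subtracting from $\binom{2(N-1)}{N-1}$. Keeping exact track of the $M=4$ symmetry throughout the counts for $\scrYcir(C_4^\circ)$ and $\scrZcir(C_4^\circ)$ is the crux: a single over- or under-counted family there shifts the closed form, so I would pin down the $M=4$ total against a small instance (e.g.\ $N=5$, where $\fkD(K_5 \setminus E_4^\circ)$ can be computed directly from the degree data of $D(K_5 \setminus E_4^\circ)$) before committing to the final expression.
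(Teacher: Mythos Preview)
Your plan is exactly the paper's: apply Theorem~\ref{thm: translation}, invoke Lemma~\ref{lem: lost sequences}, and count $|\scrXcir(C_M^\circ)|$, $|\scrYcir(C_M^\circ)|$, and (for $M=4$) $|\scrZcir(C_4^\circ)|$. Your count $|\scrXcir(C_M^\circ)|=MN$ matches the paper, and for $M\neq 4$ your $|\scrYcir(C_M^\circ)|=M(N-4)$ matches as well, giving the first branch.

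The point you single out as ``the crux'' is exactly right, and in fact more delicate than the paper's own proof acknowledges. For $M=4$ the two diagonals $\{v_0,v_2\}$ and $\{v_1,v_3\}$ are each hit twice as $i$ ranges over $\{0,1,2,3\}$, and the swap $r\leftrightarrow N-1-r$ identifies the sequences, so $|\scrYcir(C_4^\circ)|=2(N-4)$, not $4(N-4)$. The paper asserts $|\scrYcir(C_M^\circ)|=M(N-4)$ ``for all $M$'' and then uses $4(N-4)$ in the $M=4$ tally, while (inconsistently) correctly halving the naive count for $\scrZcir(C_4^\circ)$ to get $2(N-3)(N-2)$. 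With the corrected $\scrYcir$ count the lost total at $M=4$ becomes
\[
4N+2(N-4)+2(N-3)(N-2)=2(N^2-2N+2),
\]
which differs from the stated $2(N+1)(N-2)$ by $2(N-4)$. Your proposed $N=5$ sanity check detects this: one finds $|\fkD(K_5\setminus E_4^\circ)|=36$ directly from the constraints $c_2+c_4\le 2$, $c_3+c_5\le 2$, whereas the printed formula gives $70-36=34$. So your outline is sound; carrying it out carefully would lead you to correct, rather than reproduce, the $M=4$ branch.
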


\begin{proof}
	Since $N \geq 5$, $K_N \setminus E_M^\circ$ is connected, and we may apply Theorem~\ref{thm: translation}.
	By Lemma~\ref{lem: lost sequences}, the desired formula comes from computing
	\[
		|\fkD(K_N \setminus E_M^\circ)| =
			\begin{cases} 
				|\fkD(K_N)| - |\scrXcir(C_M^\circ)| - |\scrYcir(C_M^\circ)| & \text{ if } M \neq 4, \\
				|\fkD(K_N)| - |\scrXcir(C_4^\circ)| - |\scrYcir(C_4^\circ)| - |\scrZcir(C_4^\circ)| & \text{ if } M = 4.
			\end{cases}
	\]
	We already know that $|\fkD(K_N)| = \binom{2(N-1)}{N-1}$.
	Elementary counting arguments and algebra will verify that 
	\[
		|\scrXcir(C_M^\circ)| = MN  \quad \text{and} \quad |\scrYcir(C_M^\circ)| = M(N-4)
	\]
	for all $M$.
	Thus, when $M \neq 4$, we get
	\[
		|\fkD(K_N)| - |\scrXcir(C_M^\circ)| - |\scrYcir(C_M^\circ)| = \binom{2(N-1)}{N-1} - MN - M(N-4) = \binom{2(N-1)}{N-1} - 2M(N-2),
	\]
	as claimed.
	When $M=4$, it is also not difficult to directly show that $|\scrZcir(C_4^\circ)| = 2(N-3)(N-2)$, so that
	\[
		\begin{aligned}
			\fkD(K_N \setminus E_4^\circ)
				&= |\fkD(K_N)| - |\scrXcir(C_4^\circ)| - |\scrYcir(C_4^\circ)| - |\scrZcir(C_4^\circ)| \\
				&= \binom{2(N-1)}{N-1} - (4N + 4(N-4) + 2(N-3)(N-2)) \\
				&= \binom{2(N-1)}{N-1} - 2(N+1)(N-2),
		\end{aligned}
	\]
	as claimed.	
\end{proof}


\bibliographystyle{plain}
\bibliography{references}

\end{document}